\theoremstyle{plain}
\newtheorem{theorem}{Theorem}
\newtheorem{lemme}{Lemma}
\newtheorem{prop}{Proposition}
\newcommand{\bbD}{{\ensuremath{\mathbb D}} }
\newcommand{\bbE}{{\ensuremath{\mathbb E}} }
\newcommand{\bbN}{{\ensuremath{\mathbb N}} }
\newcommand{\bbP}{{\ensuremath{\mathbb P}} }
\newcommand{\bbR}{{\ensuremath{\mathbb R}} }
\newcommand{\bbT}{{\ensuremath{\mathbb T}} }
\newcommand{\cB}{\ensuremath{\mathcal B}}
\newcommand{\cE}{\ensuremath{\mathcal E}}
\newcommand{\cG}{\ensuremath{\mathcal G}}
\newcommand{\cL}{\ensuremath{\mathcal L}}
\newcommand{\cN}{\ensuremath{\mathcal N}}
\newcommand{\cP}{\ensuremath{\mathcal P}}
\newcommand{\cU}{\ensuremath{\mathcal U}}
\newcommand{\1}[1]{\bf{1}_{\{#1\}}}							
\newcommand{\va}[1]{\vert #1 \vert}							
\newcommand{\p}[1]{\parallel #1 \parallel}					
\newcommand{\E}[1]{\bbE \big[#1 \big]}						
\newcommand{\tot}[1]{#1 \times \{0,1\}}
\newcommand{\Rp}[1]{\bbR_+^{#1}}                            
\def\rk{{r_{K}}}                                          
\def\rkm{{r_{K,M}}}
\newcommand{\cm}[1]{c_{#1,M}}
\newcommand{\K}[1]{#1^{K}}                                
\newcommand{\dint}[2]{\int_{0}^{#1} \int_{#2}}				
\def\esp{{\hspace{1cm}}}									
\newcommand{\DT}[1]{\mathbb{D}([0,T], #1)}					
\def\me{\medskip \noindent}
\def\bi{\bigskip \noindent}
\def\sm{\smallskip \noindent}
\def\be{\begin{eqnarray}}
\def\ee{\end{eqnarray}}
\def\ben{\begin{eqnarray*}}
	\def\een{\end{eqnarray*}}
\title{A PDMP to model the stochastic influence of quiescence dynamics in blood cancers}
\author{C\'eline Bonnet\thanks{ENSL, UMPA, CNRS UMR 5669, 69364 Lyon, France. E-mail: \texttt{celine.bonnet@ens-lyon.fr}}}
\date{}
\begin{document}
	\maketitle
	\begin{abstract}
		
		 In this article, we will see a new approach to study the impact of a small microscopic population of cancer cells on a macroscopic population of healthy cells, with an example inspired by pathological hematopoiesis. Hematopoiesis is the biological phenomenon of blood cells production by differentiation of cells called hematopoietic stem cells (HSCs). We will study the dynamics of a stochastic $4$-dimensional process describing the evolution over time of the number of healthy and cancer stem cells and the number of healthy and mutant red blood cells. The model takes into account the amplification between stem cells and red blood cells as well as the regulation of this amplification as a function of the number of red blood cells (healthy and mutant). A single cancer HSC is considered while other populations are in large numbers. We assume that the unique cancer HSC randomly switches between an active and a quiescent state. We show the convergence in law of this process towards a piecewise deterministic Markov process (PDMP), when the population size goes to infinity. We then study the long time behaviour of this limit process. We show the existence and uniqueness of an absolutely continuous invariant probability measure with respect to the Lebesgue's measure for the limit PDMP, previously gathered. We describe the support of the invariant probability and show that the process converges in total variation towards it, using theory develop in \cite{benaim2015qualitative} and \cite{benaim2018user}. We finally identify the invariant probability using its infinitesimal generator. Thanks to this probabilistic approach, we obtain a stationary system of partial differential equation describing the impact of cancer HSC quiescent phases and regulation on the cell density of the hematopoietic system studied.
		
	\end{abstract}
	
	\emph{\textbf{Keywords:} Stochastic modeling, Cancer HSC, Macroscopic approximation, PDMP, Invariant probability measure,  System of partial differential stationary equation.}

	\emph{\textbf{MSC classes:} 60F05, 60J28, 92C32.}
	
	\section{Introduction}

	We will see a method to study the interaction between macroscopic populations and a small population with a stochastic dynamics, inspired by pathological hematopoiesis. Hematopoiesis refers to the production of blood cells by differentiation of hematopoietic stem cells (HSCs). The HSCs produce a large number of blood cells every day, in particular red blood cells. Myeloproliferative Neoplams are blood cancers in which some cancer HSCs lead to an overproduction of red blood cells and a perturbation of the whole system through regulations.
	These symptoms seem to conflict with the dynamics of quiescence of cancer HSCs, which can become inactives and no longer produce cancerous blood cells for a random time. It is important to be able to describe the influence of cancer HSC quiescence since it plays a role in the resistance of these cells to chemotherapy. In this article, we will give a probabilistic approach to obtain an equation describing the state of the hematopoietic system depending on the quiescence dynamics of one cancer HSC and regulation. 
	
	\me More precisely, we will obtain a stationary system of partial differential equations (PDEs) describing cell density of $3$ macroscopic populations depending on regulation and on the quiescence dynamics of one cancer HSC. The three populations described are the healthy HSCs, the healthy red blood cells produced by them and the cancer red blood cells produced by the only cancer HSC when it is active.
	
	\bigskip Numerous mathematical modeling have been developed to understand the dynamics of cancer HSCs (\cite{fasano2017blood,michor2008mathematical}). These models are stochastic (\cite{catlin2005kinetics,dingli2011stochastic,dingli2007symmetric,komarova2007effect}) or deterministic (\cite{ANDERSEN2020,besse2018stability,dingli2006successful,stiehl2012mathematical}) and do not take into account both deterministic and stochastic dynamics as we will see. Our model assumes the existence of a single cancer HSC while the resident (healthy) cells are in large number. We will see that these different size scales imply a difference in the time scales in which the cell dynamics occur and lead us to take into account the both dynamics.
	
	Other authors combine stochastic and deterministic methods to model the dynamics of one cell type (\cite{horn2008mathematical,roeder2006dynamic,haeno2009progenitor,glauche2012therapy}). Their objective is to approach a certain biological reality. Bangsgaard et al \cite{Bangsgaard2020} use the same model as \cite{ANDERSEN2020} in which they add the possibility for stem cells to acquire mutations randomly over time. All the cell types are considered in large populations with a deterministic dynamics. For a review of mathematical models on cancer HSC dynamics, we refer to the recent article \cite{pedersen2023}.
	
	\medskip Contrary to these models, we will highlight the difference in size scales that exists between cell populations using a scale parameter $K$. This parameter represents the number of healthy HSCs, assumed to be constant over time, and is used to quantify the high production of red blood cells by differentiation of healthy and cancer HSC. We will study the limit in law when $K$ tends to infinity of a Markov process describing the dynamics of the $4$ populations mentioned (healthy and mutant red blood cells, and healthy and mutant HSC). We will obtain a piecewise deterministic Markovian process (PDMP) which describe describe the dynamics of each cell populations depending on the random switch of the unique cancer HSC. Such a convergence, for which the limit admits discrete-valued component, has already been studied by Crudu et al (cf \cite{crudu}, Th.3.1). We have chosen to demonstrate this result by a more direct proof (cf Th.\ref{cvK}).

\me The study of the long time behavior of this process allows us to 
show existence and uniqueness of a invariant probability measure for the limit PDMP. We will deduce from this result, existence and uniqueness of a weak solution of a stationary system of partial differential equations which describe the cell density of the $3$ macroscopic populations, previously introduced, depending on cancer HSC quiescence dynamics and regulation.

	\medskip PDMPs are stochastic processes involving deterministic motion punctuated by random jumps. To a detailed description of such class of models, we refer to Davis's work \cite{davis1984piecewise,davis1993markov} (see also \cite{jacobsen2006point}). These processes are often used in mathematical modeling. They occur in biology (\cite{cloez2017probabilistic}), epidemiology (\cite{li2017threshold}), ecology (\cite{benaim2016lotka}), bacterial movement (\cite{fontbona2012quantitative}) or gene expression (\cite{mackey2013dynamic,yvinec2014adiabatic}). Each of these models admits different particularities and difficulties concerning the study of their behaviour in long time (\cite{malrieu2015some}).
	
	\bigskip We structured our mathematical results in four different sections.
	
	\me In Section \ref{chap3:sect:cv}, we present the model using Poisson point measures. By a moment control of the previously rescaled process, we show that it admits a decomposition in semi-martingales and that it converges in law, when $K$ tends to infinity, toward a PDMP (cf Th.\ref{cvK}).
	
	 \noindent We then study the long time behaviour of the limiting PDMP in Section \ref{chap3:sect:pdmp}. Following the main steps of Benaïm et al article \cite{benaim2015qualitative}, we show a set of intermediate results that permit to demonstrate the existence and uniqueness of an invariant probability measure of the limiting PDMP (cf Th.\ref{th-mes-inv}), using \cite{benaim2018user} Corollary 2.7. This measure is absolutely continuous with respect to Lebesgue's measure. The PDMP converges to it exponentially fast in total variation (cf Th.\ref{th-mes-inv}). The proof of Theorem \ref{th-mes-inv} is mainly based on the check of a weak bracket (or Hörmander) condition by the vector fields associated to the PDMP.
	
	\noindent Finally in Section \ref{chap3:sect:pers}, we identify the associated density as a couple of integrable and positive functions solution of a stationary system of partial differential equations (Th. \ref{ident}).

	\bi \textit{Notation.}
	
	\me We introduce the set $E= [0,1] \times \Rp{2}$ with norm $ \p{x}^2 = \displaystyle \sum_i x_i^2$.
	
	\sm Then let us introduce $\cE = E \times \{0,1\}$ and $C^1_b(\cE)$ the following functions set
	\begin{equation}
	\label{C1b}
	C^1_b(\cE)= \{f : \cE \to \bbR \text{ borned with a $C^1$ restriction to } E\}.
	\end{equation}

	\section{The model and its macroscopic approximation}
	\label{chap3:sect:cv}
	
	\me In this section, we will present our model and its dynamics assumptions first. We will study a Markovian process $(N^K,I^K)=(N^K_1,N^K_2,N^K_3,I^K)$ which respectively describe the number of active healthy HSCs, of healthy red blood cells, of mutant red blood cells and the state of the unique cancer HSC over time. 
	
	\me As explained in Introduction, we assume that the total number of healthy HSCs (quiescent and active) is constant over time and equal to a scale parameter $K \in \bbN^*$. This parameter is assumed to be large. Moreover we assume that the stochastic process $I^K$ is equal to $0$ when the unique cancer HSC is quiescent and to $1$ when it is active. Hence for any $t \in \bbR^+$, $$N_1^K(t) \in [0,K] \quad \text{ and } \quad I^K(t) \in \{0,1\}.$$
	Let us note that the number of quiescent healthy HSCs at time $t$ is equal to $K-N^K_1(t)$. 
	
	\me We assume that healthy (respectively cancer) HSC switch from active state to quiescent state at a constant rate $a>0$ (respectively $a_M>0$). They switch from quiescent state to active state at rate $q^K$ (respectively $q^K_M$). The function rates $q^K$ and $q^K_M$ are increasing depending on the number of healthy red blood cells and the number of mutant red blood cells. An active HSC generates a red blood cell by asymmetric division at rate $\tau$ for healthy cells and at rate $\tau_M$ for mutant cells. These division rates are regulated by the numbers of healthy and mutant red blood cells. They depend on $K$ as follows, $$\tau =K^{\alpha}\, r^K, \quad \tau_M =K^{\beta}\, r_M^K $$ with $\alpha>0$ and $\beta>0$. The functions $r^K$ and $r^K_M$ are decreasing and bounded. They model, respectively, the regulation of the production of healthy and mutant red blood cells as a function of the number of both healthy and mutant red blood cells in the system. The explicit forms of the regulation function rates $q^K$, $q_M^K$, $r^K$ and $r_M^K$ are given in \eqref{deftxK}. The powers $\alpha$ and $\beta$ model the large number of red blood cells produced by the HSCs, respectively for healthy and mutant cells.
	
	\me The red blood cells have a constant individual death rate, $d>0$ for the healthy cells and $d_M>0$ for the mutant cells. 
	
	\me Now, we are looking for an appropriate size scale in order to study the stochastic process $(N^K,I^K)$. Indeed, we have assumed that the number of healthy HSCs is constant over time and equal to $K$. Moreover, the amplification between the HSC and red blood cell compartments is modeled by a multiplicative factor $K^{\alpha}$ for healthy and $K^{\beta}$ for mutant cells, respectively. Hence, we will see in Lemma \ref{control} that these factors induced the order of magnitude of each component size $$N^K_1 \sim K, \quad N^K_2\sim K^{1+\alpha}, \quad N^K_3\sim K^{\beta}.$$
	This first result highlights an appropriate size scaling allowing to study the limits of the processes $N^K$ and $I^K$ when $K$ tends to infinity (cf Th.\ref{cvK}).\\
	
	Let us first describe more precisely the different regulations of the system. We assume that the function rates $q^K$, $q^K_M$, $r^K$ and $r^K_M$ are given, for any $(n_2,n_3) \in \bbN^{2}$, by
	
	\begin{align}
	\label{deftxK}
	\left\lbrace
	\begin{array}{l}
	\displaystyle q^K(n_2,n_3)=q(\frac{n_2}{K^{1+\alpha}},\frac{n_3}{K^{\beta}}) \\ \\
	\displaystyle q_M^K(n_2,n_3)=q_M(\frac{n_2}{K^{1+\alpha}},\frac{n_3}{K^{\beta}})\\ \\
	\displaystyle r^K(n_2,n_3)=r(\frac{n_2}{K^{1+\alpha}},\frac{n_3}{K^{\beta}}) \\ \\
	\displaystyle r_M^K(n_2,n_3)=r_M(\frac{n_2}{K^{1+\alpha}},\frac{n_3}{K^{\beta}})\\ 
	\end{array}
	\right.
	\end{align}
	where for any $(x_2,x_3) \in \Rp{2}$, 
	\begin{align}
	\label{deftx}
	\left\lbrace
	\begin{array}{l}
	q(x_2,x_3)=q_1+q_2\,x_2+q_3\,x_3, \\ \\
	q_M(x_2,x_3)=q_{1,M}+q_{2,M}\,x_2+q_{3,M}\,x_3 \\ \\
	\displaystyle r(x_2,x_3)= \frac{c_1}{1+c_2\,x_2+c_3\,x_3},\\ \\
	\displaystyle r_M(x_2,x_3)=\frac{\cm{1}}{1+\cm{2}\,x_2+\cm{3}\,x_3}
	\end{array}
	\right.  .
	\end{align}
	
	\me  We assume that
	\begin{itemize}
		\item[$(H1)$] The parameters $a$, $q_1$, $q_{1,M}$, $c_1$, $c_{1,M}$, $d$ and $d_M$ are strictly positive;
		\item[$(H2)$] The parameters $q_2$, $q_3$, $c_2$, $c_3$, $q_{2,M}$, $q_{3,M}$, $c_{2,M}$, $c_{3,M}$ are positive.
	\end{itemize}
	

	
	\me For any $K$, the process $N^K$ defines a Markov jump process whose dynamics is described by the following stochastic system. 
	
	\me Let $(\cN^j_i)_{\underset{j \in \{+,-\}}{i \in \{1,2,3,4\}}}$ be independent Poisson point measures on $(\Rp{} \times \Rp{}, \; \cB(\Rp{}) \otimes \cB(\Rp{}))$ with intensity $dsdu$.
	
	\me Let $(\mathcal{F}_t)_{t \geq 0}$ be the associated filtration, $$\mathcal{F}_t = \sigma(\cN^j_i([0,s),A); \, i \in \{1,2,3,4\},  j \in \{+,-\} \,s \leq t, \, A \in \cB(\Rp{}) ).$$
	
	Then for any $t\geq0$, we define
	
	\begin{align}
	\label{nk}
	N^K_1(t) =\, N^K_1(0) &+ \dint{t}{\Rp{}} \1{\,u \,\leq \,a \, \big(\,K - N^K_1(s^-)\,\big) \,} \cN^+_1(ds,du) \nonumber\\ &- \dint{t}{\Rp{}} \1{\,u \,\leq \, q^K(N_2^K(s^-),N_3^K(s^-)) \, N^K_1(s^-)\,} \cN^-_1(ds,du)  \nonumber\\
	N^K_2(t) =\, N^K_2(0) &+ \dint{t}{\Rp{}} \1{u \leq K^{\alpha} \, \rk(N_2^K(s^-),N_3^K(s^-))N^K_1(s^-)} \cN^+_2(ds,du) \\&- \dint{t}{\Rp{}} \1{u \leq d N^K_2(s^-)} \cN^-_2(ds,du) \nonumber	\\ \nonumber
	N^K_3(t) =\, N^K_3(0) &+ \dint{t}{\Rp{}} \1{\,u \,\leq\, K^{\beta} \, \rkm(N_2^K(s^-),N_3^K(s^-))\,I^K(s^-) \,} \cN_{3}^+(ds,du) \\ &-\dint{t}{\Rp{}} \1{\,u\, \leq \,d_M\, N^K_3(s^-)\,} \cN_{3}^-(ds,du) \nonumber\\
	I^K(t) =\, I^K(0) &+ \dint{t}{\Rp{}} \1{\,u \,\leq\, a_M \,\big(\,1 - I^K(s^-)\,\big)} \cN_{4}^+(ds,du)   \nonumber\\ & - \dint{t}{\Rp{}} \1{\,u \,\leq\, q^K_M(N_2^K(s^-),N_3^K(s^-)) \,I^K(s^-)\,} \cN_{4}^-(ds,du) \nonumber.\\\nonumber
	\end{align}
	where the functions $q^K$, $q_M^K$, $r^K$ and $r^K_M$ are introduced in \eqref{deftxK}-\eqref{deftx}. 
	
\me Let us also define the stochastic process $X^K$ by  \begin{equation}\label{defXK}\K{X}=(\frac{N_1}{K}, \frac{N^K_2}{K^{1+\alpha}},\frac{N_3^K}{K^{\beta}}).\end{equation}
	Let us firstly state a uniform control for 2 order moment of $(X^K)_K$.
	\begin{lemme}
	\label{control}
	We assume that the $\cE$-valued random vector $(\K{X}(0),I^K(0))$ satisfies \begin{equation}\label{hyplem}\displaystyle \sup_K \E{\p{X^K(0)}^2}< \infty.\end{equation} Then for any $T>$0, 
	\begin{equation*}
		\displaystyle \sup_K \E{ \sup_{t \leq T} \p{X^K(t)}^2 } < \infty.
	\end{equation*}
\end{lemme}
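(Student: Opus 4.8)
The plan is to exploit the semimartingale structure of $\K{X}$ together with the boundedness of all the regulation rates, and to close a Gronwall estimate on the square of the norm. First, the first coordinate needs no work: since $N^K_1(t)\in[0,K]$ for every $t$, one has $N^K_1(t)/K\in[0,1]$, so that $\supt{T}\big(N^K_1(t)/K\big)^2\leq 1$ almost surely. It therefore remains to control the two coordinates $\K{X}_2=N^K_2/K^{1+\alpha}$ and $\K{X}_3=N^K_3/K^{\beta}$. Dividing the equations \eqref{nk} for $N^K_2$ and $N^K_3$ by $K^{1+\alpha}$ and $K^{\beta}$ respectively and compensating the Poisson measures, I would write, for $i\in\{2,3\}$,
\begin{equation*}
\K{X}_i(t)=\K{X}_i(0)+\int_0^t b_i^K(s)\,ds+M_i^K(t),
\end{equation*}
with $M_i^K$ a locally square-integrable martingale and drifts
\begin{equation*}
b_2^K(s)=r^K\,\K{X}_1(s)-d\,\K{X}_2(s),\qquad b_3^K(s)=r_M^K\,I^K(s)-d_M\,\K{X}_3(s),
\end{equation*}
where $r^K,r_M^K$ are evaluated at $(N^K_2(s),N^K_3(s))$. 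The crucial structural fact is that the birth rates are \emph{linear} in the bounded quantities $N^K_1\leq K$ and $I^K\leq 1$, while $r\leq c_1$ and $r_M\leq c_{1,M}$ by \eqref{deftx} and $(H1)$--$(H2)$; hence the positive part of each drift is bounded by a constant independent of $K$, the only unbounded contributions being the stabilising terms $-d\,\K{X}_2$ and $-d_M\,\K{X}_3$.

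Next I would apply the It\^o formula for jump processes to $x\mapsto x^2$ on each coordinate. Writing $h_2=K^{-(1+\alpha)}$ and $h_3=K^{-\beta}$ for the jump sizes of $\K{X}_2,\K{X}_3$, the generator acting on $(\K{X}_i)^2$ is $2\,\K{X}_i\,b_i^K+h_i^2(\lambda_i^++\lambda_i^-)$, where $\lambda_i^\pm$ are the corresponding birth/death rates. The quadratic correction term is of order $1/K$: for $i=2$ it equals $r^K\,\K{X}_1/K+d\,\K{X}_2/K^{1+\alpha}$, and similarly for $i=3$, so it is controlled by $C(1+\p{\K{X}}^2)/K$ uniformly in $K\geq 1$. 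Bounding the cross terms by $2\,\K{X}_i\,b_i^K\leq c_1+c_{1,M}+C\p{\K{X}}^2$ via $2ab\leq a^2+b^2$, I obtain a constant $C_T$ independent of $K$ such that the generator of $\p{\K{X}}^2$ is at most $C_T\big(1+\p{\K{X}}^2\big)$.

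The main step is then to convert this into a bound on the expected supremum uniform in $K$. I would introduce the stopping times $T_n=\inf\{t\geq0:\p{\K{X}(t)}\geq n\}$, which increase to infinity almost surely because the jump rates of $(N^K,I^K)$ grow at most affinely in the state, so the process does not explode. Stopping at $T_n$ turns the $M_i^K$ into genuine martingales; taking expectations in the It\^o formula and invoking Gronwall's lemma gives $\sup_K\supt{T}\E{\p{\K{X}(t\w T_n)}^2}\leq C_T'$, uniform in both $K$ and $n$ by \eqref{hyplem}. To upgrade $\sup_t\bbE$ into $\bbE\sup_t$, I would return to the decomposition, bound $\big(\int_0^{T}\va{b_i^K}\,ds\big)^2\leq T\int_0^T\va{b_i^K}^2\,ds$ by Cauchy--Schwarz, and control $\E{\supt{T\w T_n}(M_i^K(t))^2}$ through Doob's $L^2$ inequality by $4\,\E{\langle M_i^K\rangle_{T\w T_n}}$; this bracket is exactly the time integral of the $O(1/K)$ correction above, hence uniformly bounded by the previous estimate. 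Summing the three coordinates and letting $n\to\infty$ by Fatou's lemma yields the claimed uniform control of $\E{\supt{T}\p{\K{X}(t)}^2}$. I expect the only delicate points to be the bookkeeping of the $1/K$ jump corrections -- verifying they remain bounded uniformly rather than accumulating -- and the non-explosion argument ensuring $T_n\uparrow\infty$; everything else is a routine Gronwall--Doob argument made possible by the boundedness of $r$ and $r_M$.
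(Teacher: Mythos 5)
Your proof is correct and follows essentially the same route as the paper's, which likewise rests on It\^o's formula for the squared coordinates, the structural bounds $X^K_1\leq 1$, $I^K\leq 1$, $r\leq c_1$, $r_M\leq c_{1,M}$, a localization argument, Gronwall's lemma, and (implicitly, via the cited reference) Doob's inequality to pass to the expected supremum. One cosmetic slip: the birth contribution to the quadratic correction for $i=2$ equals $r^K X^K_1/K^{1+\alpha}$ rather than $r^K X^K_1/K$, but since you only invoke it as an $O(1/K)$ upper bound, the argument is unaffected.
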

\begin{proof}
Using Itô's formula, \eqref{deftxK}-\eqref{deftx}, a localization argument and Gronwall's lemma (see for example \cite{CoursMeleardVincent}), we easily obtain, for any $T>0$ and $K \in \bbN^*$, $$\E{\displaystyle \sup_{t \in [0,T]} (X^K_2(t))^2} \leq (\E{(X^K_2(0))^2} + 3\,T)\, e^{3\,c_1\,T} <\infty $$ and $$\E{\displaystyle \sup_{t \in [0,T]} (X^K_3(t))^2} \leq (\E{(X^K_3(0))^2} + 3\,T)\, e^{3\,c_{1,M}\,T} <\infty.$$

Then the result follows using \eqref{hyplem}. 

\end{proof}

\me Finally we can state the main result of this section describing the asymptotic first-order behavior of the process $(X^K,I^K)$ over a finite time interval.

\begin{theorem}
\label{cvK}
Let $T>0$ and $\K{X}$ be the stochastic process valued in $\DT{E}$ defined in \eqref{defXK}
We assume that the sequence of random vectors $(\K{X}(0),I^K(0))_K$ converges in law to $(x_0,i_0) \in \cE$ and satisfies \begin{equation}\label{hyp0}\displaystyle \sup_K \E{\p{X^K(0)}^2}< \infty.\end{equation}
Then the sequence $((X^{K}(t),I^K(t)),\; t\in [0,T])_{K}$ converges in law in $\DT{\cE}$, when $K$ tends to infinity, towards the stochastic process $(X,I)$ with initial condition $(x_0,i_0) \in \cE$ and infinitesimal generator $\cL$ defined for $f \in C^1_b(\cE)$ by 

$\forall x \in E$, $i \in \{0,1\}$,
\begin{equation}\label{L}\cL f(x,i) = \displaystyle \sum_{j=1}^3 \big( \frac{\partial f}{\partial x_j} (x,i) \, g_j(x,i) \big) \;+\, a_M\,(1-i) ( f(x,i+1)-f(x,i)) + q_M(x_2,x_3) i (f(x,i-1) - f(x,i)).\end{equation}
The functions $g$ is defined as \begin{equation}\label{g}
g(x,i) =\big( a -(a + q(x_2,x_3)) x_1\, , r(x_2,x_3)x_1 - d x_2\, , r_M(x_2,x_3) i - d_M x_3 \big)^T.
\end{equation} The functions $q$, $q_M$, $r$ and $r_M$ are defined in \eqref{deftx} and the set $C^1_b(\cE)$ in \eqref{C1b}.
\end{theorem}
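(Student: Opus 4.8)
The plan is to follow the classical three-step scheme for the convergence of Markov processes (see \cite{CoursMeleardVincent}): first obtain a semimartingale decomposition of $(\K{X},I^K)$ and prove tightness of the sequence in $\DT{\cE}$; then identify every limit point as a solution of the martingale problem associated with $\cL$; finally invoke uniqueness of this martingale problem to conclude. The starting point is to read off from the Poisson representation \eqref{nk} the decomposition
$$\K{X}(t)=\K{X}(0)+\int_0^t g(\K{X}(s),I^K(s))\,ds + M^K(t),$$
where the drift is \emph{exactly} $g$ from \eqref{g}: the rescaling $X_1=N_1/K$, $X_2=N_2/K^{1+\alpha}$, $X_3=N_3/K^{\beta}$ chosen in Lemma \ref{control} is precisely the one turning the compensators of the three birth/death mechanisms into $g_1,g_2,g_3$, via $q^K(n_2,n_3)=q(x_2,x_3)$, $r^K=r$, $r_M^K=r_M$. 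Here $M^K$ is a square-integrable martingale whose bracket is, componentwise, of the form $\langle M_j^K\rangle_t=\int_0^t (\text{jump size})^2\,(\text{total rate})\,ds$, hence of order $K^{-1}$, $K^{-(1+\alpha)}$, $K^{-\beta}$ respectively; similarly $I^K$ decomposes with compensated drift $a_M(1-I^K)-q_M(X_2^K,X_3^K)I^K$.

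First I would prove tightness via the Aldous--Rebolledo criterion. The moment bound of Lemma \ref{control} makes $g(\K{X},I^K)$ bounded in $L^2$ uniformly in $K$ over $[0,T]$ (it grows only linearly in $x$), so the finite-variation parts have uniformly small increments on small intervals; and the brackets above tend to $0$, so $\supt{T}\p{M^K(t)}\to0$ in $L^2$ by Doob's inequality. Tightness of the discrete component $I^K$ follows because its jump intensity $a_M+q_M(X_2^K,X_3^K)$ is integrable uniformly in $K$ (again by Lemma \ref{control}), so the expected number of jumps on $[0,T]$ is bounded and the Aldous condition is met. This yields tightness of $(\K{X},I^K)_K$ in $\DT{\cE}$.

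Next I would identify the limit. For $f\in C^1_b(\cE)$ the process $f(\K{X}(t),I^K(t))-f(\K{X}(0),I^K(0))-\int_0^t \cL^K f\,ds$ is a martingale, where $\cL^K$ denotes the pure-jump generator of $(\K{X},I^K)$. A first-order Taylor expansion separates the two scales: since the jumps of the continuous coordinates have sizes $K^{-c_j}$ while their rates are of order $K^{c_j}$ (with $c_1=1$, $c_2=1+\alpha$, $c_3=\beta$), the birth/death contributions converge to $\sum_{j=1}^3 \partial_{x_j}f\cdot g_j$, the remainders being controlled by a modulus of continuity of $\nabla f$ after a localization on compact subsets of $E$ (legitimate since $f$ is only $C^1$ there, the localization being paid for by the $L^2$ bound of Lemma \ref{control}); whereas the jumps of $I^K$ are of order one and reproduce verbatim the discrete part $a_M(1-i)(f(x,i+1)-f(x,i))+q_M(x_2,x_3)i(f(x,i-1)-f(x,i))$ of \eqref{L}. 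Hence $\cL^K f\to \cL f$, and passing to the limit along a converging subsequence (using continuity of $g$ and $q_M$ together with the vanishing of $M^K$) shows that $(X,I)$ solves the martingale problem for $\cL$ with initial law $\delta_{(x_0,i_0)}$.

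Finally, uniqueness: $\cL$ generates a PDMP whose deterministic flow solves $\dot x=g(x,i)$ --- a vector field that is locally Lipschitz and inward-pointing on $E=[0,1]\times\Rp{2}$ (indeed $g_1>0$ at $x_1=0$ and $g_1<0$ at $x_1=1$ by $(H1)$, while $g_2,g_3\ge0$ on $\{x_2=0\}$, $\{x_3=0\}$), hence globally well-posed and leaving $E$ invariant --- punctuated by jumps of $I$ at the locally bounded rate $a_M(1-i)+q_M(x_2,x_3)i$. Davis's theory then yields existence, non-explosion (the moment control rules out accumulation of jumps) and uniqueness of such a PDMP, equivalently well-posedness of the martingale problem, so the subsequential convergence upgrades to convergence of the whole sequence. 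I expect the main obstacle to be the tightness-plus-identification step rather than uniqueness: the rates diverge like $K^{c_j}$, so everything hinges on the exact cancellations produced by the scaling of Lemma \ref{control}, and the $C^1$ (rather than $C^2$) regularity of the admissible test functions forces the localization argument to control the Taylor remainders.
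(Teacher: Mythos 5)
Your proposal is correct and follows the same skeleton as the paper's proof: the semimartingale decomposition read off from the Poisson representation \eqref{nk} (with the scaling of Lemma \ref{control} producing exactly the drift $g$ and brackets of order $K^{-1}$, $K^{-(1+\alpha)}$, $K^{-\beta}$), tightness via Aldous--Rebolledo, vanishing of the martingale parts by Doob's inequality, and identification of any limit point through the uniform convergence of the generators $\cL^K f \to \cL f$ by Taylor expansion (your remark that the $C^1$-only regularity of test functions forces some care in controlling the remainders is well taken and applies to the paper's argument as well). The genuine divergence is in the uniqueness step. The paper does not invoke Davis's theory as a black box: it extracts from the limiting martingale problem a pure-jump martingale $M$ driving $I$, computes $\langle M\rangle$ via It\^o's formula applied to $I^2=I$ and Doob--Meyer uniqueness, then uses a martingale representation theorem (\cite{lepelmar}, Th.~22 p.~66) to realize $I$ as a solution of an SDE driven by a Poisson point measure; combined with Cauchy--Lipschitz for the flow component, this yields \emph{pathwise} uniqueness of the coupled system \eqref{lim}, hence uniqueness of the limit law. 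Your route --- citing well-posedness of the PDMP martingale problem --- is shorter, but it quietly assumes the very point at stake: that \emph{any} c\`adl\`ag solution of the martingale problem for $\cL$ with test class $C^1_b(\cE)$ has the law of the Davis-constructed PDMP. Davis's framework gives the construction, non-explosion and the extended generator of that process; the identification of all martingale-problem solutions with it requires either a precise citation (this is essentially what \cite{crudu}, Th.~3.1 establishes in a comparable setting) or the explicit representation argument the paper carries out. So your approach is viable, but you should either supply that exact reference or fill in the pathwise-uniqueness construction; also note that your closing guess that uniqueness is the easy part inverts where the paper actually spends its effort.
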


The limiting process $(X,I)$ is called a Piecewise Deterministic Markov Process (PDMP). The process $I$ randomly switches between $0$ and $1$, whereas the process $X$ has almost surely continuous trajectories deterministically defined between two switches of $I$, as the unique solution of the equation $\frac{dX(t)}{dt} = g(X(t),I(t))$. We will see in Section \ref{chap3:sect:pers} how can be constructed such a process.

\me Now let us prove Theorem \ref{cvK}.
\begin{proof} 
	We deduce from \eqref{hyp0}, Lemma \ref{control} and \eqref{nk} the following decomposition in semi-martingales of the process $(X^K,I^K)$.
	
	\sm $\forall t\geq 0$,
	\begin{align}
	\label{semiM}
	X^K_1(t) =\, X^K_1(0) &+ \int_{0}^{t} \Big(a \, (\,1 - X^K_1(s)\,) - q(X_2^K(s),X_3^K(s))X^K_1(s) \Big)\, ds + M^K_1(t) \nonumber\\
	X^K_2(t) =\, X^K_2(0) &+ \int_{0}^{t} \Big(r(X_2^K(s),X_3^K(s))X^K_1(s)- d X^K_2(s) \Big)\, ds + M^K_2(t)	\\
	X^K_3(t) =\, X^K_3(0) &+ \int_{0}^{t} \Big( r_M(X_2^K(s),X_3^K(s))I^K(s) -  d_M X^K_3(s) \Big)\,ds + M^K_3(t)\nonumber\\
	I^K(t) =\, N^K_3(0) &+ \int_{0}^{t} \Big(a_M \,(\,1 - I^K(s)\,) -  q_M(X_2^K(s),X_3^K(s)) I^K(s) \Big)\,ds + \widehat{M^K}(t),\nonumber
	\end{align}
	where $M_i^K$ and $\widehat{M^K}$ are square integrable martingales with the following quadratic variations
	\begin{align}
	\label{crocM}
	\langle M^K_{1} \rangle_{t} &=\, K^{-1}\, \int_{0}^{t} \Big(a \, (\,1 - X^K_1(s)\,) + q(X_2^K(s),X_3^K(s)) \,X^K_1(s) \Big)\, ds  \nonumber\\
	\langle M^K_{2} \rangle_{t} &=\, K^{-(1+\alpha)}\, \int_{0}^{t} \Big(r(X_2^K(s),X_3^K(s))X^K_1(s)+ d X^K_2(s) \Big)\, ds	\\
	\langle M^K_{3} \rangle_{t} &=\, K^{-\beta} \,\int_{0}^{t} \Big( r_M(X_2^K(s),X_3^K(s))I^K(s) +  d_M X^K_3(s) \Big)\,ds \nonumber\\
	\langle \widehat{M^K} \rangle_{t} &=\,  \int_{0}^{t} \Big(a_M \,(\,1 - I^K(s)\,) +  q_M(X_2^K(s),X_3^K(s)) I^K(s) \Big)\,ds\nonumber\\
	\langle M^K_{i},M^K_{j} \rangle_{t} &=\langle M^K_{i},\widehat{M^K} \rangle_{t}=\,0 \quad \text{ pour } i \neq j.\nonumber
	\end{align}
	
	Then we can easily check the Aldous and Rebolledo tightness criteria (see \cite{JoffeMetivier} and \cite{CoursMeleardVincent}). We deduce the uniform tightness of the sequence of law of $(X^K,I^K)_K$ in $\cP(\DT{\Rp{4}})$, the set of probabilities on $\DT{\Rp{4}}$.
	According to Prohorov's Theorem (see \cite{billingsley2013convergence}), there exists a limiting probability measure $\mu$ toward which a sub-sequence of $(X^K,I^K)_K$ converges. In the following, $(X^K,I^K)_K$ will denote this sub-sequence by simplicity. Let us now identify this measure $\mu$.
	
	\me We know that
	$$\displaystyle \sup_{t\in [0,T]} \p{\Delta X^K(t)} \leq \sqrt{K^{-2}+K^{-2(1+\alpha)}+ K^{-2\beta}} \leq \sqrt{3} \; K^{-(1\wedge\beta)}.$$
	Thus by continuity of the function $(x,i) \in\bbD([0,T],\bbR^4) \to \displaystyle \sup_{t\in [0,T]} \p{\Delta x(t)} \in \Rp{}$, we deduce that the limiting measure $\mu$ only loads the set of $\Rp{4}$-valued processes with continuous three first components.

	\me Further, from Doob's inequality, we know that for any $T>0$, 
	\begin{align*}\E{\sup_{t \leq T} \va{M^K_i(t)}^2}\leq 4 \,\E{<M^K_i>_T} .
	\end{align*}
	Then, we obtain by (\ref{crocM}) and Lemma \ref{control}, that
	$\displaystyle  \lim_{K \to \infty} \E{ \sup_{t \in [0,T]} \va{ M^K_i(t)}^2} =0$. Using Markov's inequality, we deduce that the three sequences of martingales $((M^K_i)_K,\quad i\in \{1,2,3\})$  converge in probability and for the uniform norm, to $0$.
	
	\me The infinitesimal generator associated with the process $(X^K,I^K)$  is given, for any $f \in C^1_b(\cE)$ and $(x,i)\in\cE$, by
	\begin{align*}  \cL^K(f)(x,i) = &\big( f(x+e_1\,K^{-1},i) - f(x,i) \big) \, a \, (1-x_1) \, K + \big( f(x-e_1\,K^{-1},i) - f(x,i) \big) \, q(x_2,x_3) \, x_1 \, K\\ & + \big( f(x+e_2\,K^{-(1+\alpha)},i) - f(x,i) \big) \, r(x_2,x_3) \, x_1 \, K^{1+\alpha}\\ & + \big(f(x+e_3\,K^{-\beta},i) - f(x,i) \big) \, r_M(x_2,x_3) \, i \, K^{\beta}\\& + \big( f(x,i+1) - f(x,i) \big) \, a_M \, (1-i) + \big( f(x,i-1) - f(x,i) \big) \, q_M(x_2,x_3) \, i. \end{align*}
	Then, by a Taylor expansion, we obtain \be\label{Cvgene}\forall f \in C^1_{b},\quad \lim_{K \to \infty} \sup_{(x,i) \in \cE} \va{ \cL^K(f)(x,i) - \cL(f)(x,i)}=0\ee where $\cL$ has been defined in \eqref{L}.
	
	\me Let us introduce the function $\xi^{K,f}_t$ on $\DT{\cE}$, for $f \in C^1_b(\cE)$, by   $$ \xi^{K,f}_t(x,i) = f(x_t,i_t) - f(x_0,i_0)- \int_0^t \cL^K(f)(x_s,i_s)\,ds.$$
	We deduce from Dynkin's formula, \eqref{hyp0} and Lemma \ref{control}, that $(\xi_t^{K,f}(X^K,N^K_3))_K$ defines a sequence of uniformly integrable martingales.
	 
	\me Let us define $(X,I)$ as the canonical process under $\mu$. Then, by studying the limits when K tends to infinity of $(\xi_t^{K,f}(X^K,N^K_3))_K$, we deduce from \eqref{Cvgene} that the process $(X,I)$ is a solution of a coupling between the following Cauchy and martingale problems
	
	\begin{equation}
	\label{lim}
	\left\lbrace   \begin{array}{l}
		\displaystyle \frac{dX(t)}{dt}=g(X(t),I(t))\\ 
		\\f(I_t) - f(I_0) - \int_0^t  \big( f(I_s+1) - f(I_s) \big) \, a_M \, (1-I_s) \,ds \\+ \int_0^t \big( f(I_s-1) - f(I_s) \big) \, q_M(X_2(s),X_3(s)) \, I_s\,ds \quad \text{ is a martingale}\\
	\end{array}
	\right. .
	\end{equation}
	
	\me Furthermore, we deduce from the martingale problem the existence of a pure jump martingale $M$ such that
	\begin{equation}
	\label{Ma}
	\forall t\in [0,T], \esp I(t) = I(0) + \int_{0}^{t } a_M - \Big(a_M+ q_M(X_2(s),X_3(s))\Big) I(s) ds + M(t)\in \{0,1\}.
	\end{equation} 
	We can then apply Itô's formula to $I^2(t)$. Since for all $t$, $ I(t) \in \{0,1\}$, $I^2=I$ and we obtain, by unicity of the Doob-Meyer semi-martingale decomposition, that $M$ is an integrable martingale with quadratic variation given by 
	$$<M>_t=\int_0^t a_M (1-I(s)) +q_M(X_2(s),X_3(s)) I(s) \,ds.$$
	
	Such a square integrable martingale $M$ is unique (cf \cite{lepelmar} Th.22 p.66). We deduce from this latter the existence of a Poisson point measure $\textbf{N}$ on $(\Rp{} \times \Rp{}, \; \cB(\Rp{}) \otimes \cB(\Rp{}))$ with intensity $dsdu$ such that $$I(t) =I(0) + \int_0^t \int_{\Rp{}} \1{u \leq a_M (1-I(s^-))} - \1{ a_M (1-I(s^-)) < u \leq a_M + (q_M(X_2(s),X_3(s))-a_M) I(s^-)} N(du,ds).$$
	Finally, according to the Cauchy-Lipschitz Theorem, the solution of \eqref{lim} pathwise is unique.
	
	\me The pathwise uniqueness of $(X,I)$ implies the uniqueness of the limiting law $\mu$. We then deduce the convergence in law of the process $(X^K,I^K)$ in $\DT{\cE}$ to the PDMP with infinitesimal generator $\cL$ defined by \eqref{L}.

\end{proof}

\section{Long time behaviour of the limiting process.}
\label{chap3:sect:pdmp}

\me The aim of this section is to prove the following result.

\begin{theorem}
	\label{th-mes-inv} We assume $(H1)$ and $(H2)$ (cf Section \ref{chap3:sect:cv}) and \begin{equation}
	\label{hypdep}c_3+q_3>0.
	\end{equation}
	
	Then the process $(X,I)$ admits a unique invariant probability measure $\pi$ absolutely continuous with respect to Lebesgue's measure with support $\Gamma \times \{0,1\}$. The set $\Gamma $ will be defined \eqref{gamma}. 
	
	Moreover there exist strictly positive constants $C$ and $\gamma$ such that for any $t \geq0$ and for any $(x,i) \in M\times \{0,1\}$,
	\begin{equation}
	\label{cvexp} \p{ \bbP \Big((X_t,I_t) \in . \;\vert \, (X_0,I_0)=(x,i)\Big) - \pi }_{TV}\; \leq\; Ce^{-\gamma t}.\end{equation}
\end{theorem}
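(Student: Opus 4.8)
The plan is to cast the problem in the framework of Benaïm, Le Borgne, Malrieu and Zitt \cite{benaim2015qualitative} and to conclude via the Harris-type criterion of \cite{benaim2018user}, Corollary 2.7. That corollary simultaneously yields uniqueness of $\pi$, its absolute continuity, the description of its support and the exponential estimate \eqref{cvexp}, provided three ingredients are assembled: (i) a confinement/Foster--Lyapunov property giving tightness and existence of an invariant probability; (ii) an accessibility statement producing a point reachable from everywhere and identifying the region $\Gamma$ of \eqref{gamma}; and (iii) a weak Hörmander (bracket) condition at an accessible point, which upgrades the invariant measure to an absolutely continuous one and supplies a local Doeblin minorization. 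Throughout I write $g^{0}:=g(\cdot,0)$ and $g^{1}:=g(\cdot,1)$ for the two vector fields of \eqref{g}, with associated flows $\Phi^{0},\Phi^{1}$.

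For confinement, first note that $x_1$ stays in $[0,1]$: at $x_1=0$ one has $g_1=a>0$, while at $x_1=1$ one has $g_1=-q(x_2,x_3)\le -q_1<0$. Since $r\le c_1$, $r_M\le c_{1,M}$ and $x_1\le 1$, the remaining components satisfy $\dot x_2\le c_1-d\,x_2$ and $\dot x_3\le c_{1,M}-d_M\,x_3$, so both flows drive the trajectory into a compact box $B\subset E$ that is forward-invariant for $\Phi^0$ and $\Phi^1$. Hence the PDMP is eventually confined to the compact set $B\times\{0,1\}$, on which the jump rates $a_M$ and $q_M$ are bounded; a drift condition for $W(x)=1+\p{x}^2$ then holds trivially and Krylov--Bogolyubov gives at least one invariant probability measure. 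Each flow carries a unique equilibrium in $B$: a point $p_0$ with third coordinate $0$ for $g^{0}$ (since $\dot x_3=-d_M x_3$ when $i=0$) and a point $p_1$ with positive third coordinate for $g^{1}$; analysing the monotone $(x_1,x_2)$--subsystem shows these equilibria are attracting, which is the starting point for the reachability analysis.

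The heart of the argument is the bracket condition, and this is where \eqref{hypdep} enters. From \eqref{g} one computes the difference
\[
g^{1}-g^{0}=\bigl(0,\,0,\,r_M(x_2,x_3)\bigr)^{T},
\]
which is nonzero everywhere because $c_{1,M}>0$. Using $\partial_3 q=q_3$ and $\partial_3 r=-c_3\,r^2/c_1$ (from \eqref{deftx}), the Lie bracket $[g^{0},g^{1}]=[g^{0},g^{1}-g^{0}]$ has first two components
\[
\bigl([g^{0},g^{1}]\bigr)_{1,2}=\Bigl(r_M\,q_3\,x_1,\ r_M\,c_3\,r^2\,x_1/c_1\Bigr).
\]
Under \eqref{hypdep}, i.e. $c_3+q_3>0$, and at a point with $x_1>0$, this horizontal part is nonzero, hence transverse to $e_3$; together with $g^{0}$, whose horizontal projection $\bigl(a-(a+q)x_1,\ r x_1-d x_2\bigr)$ is generically independent of it, the three vectors $g^{0}$, $g^{1}-g^{0}$ and $[g^{0},g^{1}]$ span $\bbR^3$. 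Thus the weak bracket condition holds at a suitable point with $x_1>0$, which by \cite{benaim2015qualitative} forces the invariant measure to be absolutely continuous on the interior of $\Gamma$ and, combined with accessibility, yields the minorization needed for Harris' theorem. The accessible set $\Gamma$ is then defined as the closure of the reachable set obtained by concatenating $\Phi^0$ and $\Phi^1$, and the support of $\pi$ is $\Gamma\times\{0,1\}$.

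Assembling (i)--(iii), Corollary 2.7 of \cite{benaim2018user} gives uniqueness of $\pi$, absolute continuity with support $\Gamma\times\{0,1\}$, and the exponential convergence \eqref{cvexp}. The \emph{main obstacle} I expect is not the bracket computation, which is short and is precisely where $c_3+q_3>0$ is used, but rather the accessibility/controllability analysis: one must show that a single interior point with $x_1>0$ is reachable from every initial condition in $B$ by switching between the two nonlinear flows, and must pin down $\Gamma$ explicitly, which requires a careful global description of $\Phi^0$ and $\Phi^1$ and of how alternating them fills out a three-dimensional region. The delicate coupling is ensuring that the point at which the spanning determinant is nonzero genuinely lies in $\Gamma$ rather than merely in the interior of $B$.
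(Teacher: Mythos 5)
Your overall strategy coincides with the paper's: confinement to a positively invariant compact set, an accessibility analysis identifying $\Gamma$, the weak bracket condition, and then \cite{benaim2018user} Corollary 2.7. Your computation of $g^{1}-g^{0}=(0,0,r_M)^{T}$ and of the first two components of $[g^{0},g^{1}]$ is correct (it matches the paper's, up to sign). But there is a genuine gap at the decisive step, precisely the one you dismiss as short: you assert that at a suitable accessible point with $x_1>0$ the horizontal projections of $g^{0}$ and $[g^{0},g^{1}]$ are ``generically independent'', hence $g^{0}$, $g^{1}-g^{0}$, $[g^{0},g^{1}]$ span $\bbR^3$. This is unproven, and it is in fact false in a case allowed by the hypotheses. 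If $q_2=q_3=0$ (so that \eqref{hypdep} only forces $c_3>0$), the first component $g_1(x,i)=a-(a+q_1)x_1$ depends on $x_1$ alone, so every point of $\Gamma=\overline{\gamma^+(p)}$ has $x_1$ frozen at $a/(a+q_1)$, $g_1$ vanishes identically on $\Gamma$, and every iterated bracket has vanishing first component (your own formula gives $[g^{0},g^{1}]_1=\pm r_M q_3 x_1=0$, and one checks inductively that all higher brackets share this property). Hence the Lie algebra spans at most a two-dimensional subspace at every accessible point, and the three-dimensional spanning you claim is impossible. The paper handles this case separately, by observing that the process restricted to $\Gamma$ is effectively two-dimensional and verifying the bracket condition for the reduced planar process.

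Even in the non-degenerate case ($q_2\neq0$ or $q_3\neq0$) your argument is incomplete: the natural accessible point, the equilibrium $p$ of $g^{0}$ from which $\Gamma$ is generated, satisfies $g^{0}(p)=0$, so spanning fails there, and one must exhibit a point of $\Gamma$ (not merely of $B$) where $[g^{0},g^{1}]$ is not a linear combination of $g^{0}$ and $g^{1}$. The paper does this by contradiction: assuming collinearity everywhere on $\Gamma$ yields the algebraic identity \eqref{absurde} relating $x_1$ to $(x_2,x_3)$, and then moving along $\phi^{1}_t(p)$ and $\phi^{0}_u$, together with a polynomial-in-$x_3$ argument (a polynomial of degree at most $2$ cannot vanish on a nontrivial interval without vanishing identically), produces accessible points violating that identity; the sub-case $c_3=0$, $q_3\neq0$ needs a separate argument. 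Your closing remark correctly identifies that placing the spanning point inside $\Gamma$ is the delicate issue, but the proposal offers no mechanism for it; moreover you locate the main difficulty in the accessibility analysis, whereas in the paper that part is the comparatively easy one (Poincar\'e--Bendixson plus a divergence criterion for the planar subsystem), and the bulk of the proof is exactly the verification of \eqref{braket} on $\Gamma$ with its case distinctions.
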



To prove this result, we establish intermediate results following the main steps than in Benaïm et al \cite{benaim2015qualitative}. We first construct a positively invariant compact set with respect to the flows associated with the dynamics of the process $X$ (Lemma \ref{compact}). Then we describe the set of accessible points of the process $(X_t,I_t)$ (Lemma \ref{thp}).
These two lemmas are the key points to prove Theorem \ref{th-mes-inv}.

\me Firstly, note that the assumption \eqref{hypdep} in Theorem \ref{th-mes-inv} ensures that the dynamics of $X_1$ and $X_2$ depend on the random dynamics of $I$. Indeed, in the opposite, the switch and division rates of healthy HSCs would be independent of the number of mutants red blood cells (cf \eqref{deftx}).

\bigskip \noindent Let us now specify the notations and state the two lemmas.

\me For $i\in \{0,1\}$, $\phi^i$ is the flow associated with the vector field $g(. ,i)$ defined by \eqref{g}. In other words, the function $t \mapsto \phi^i_t(x_0)=\phi^i(x_0,t)$ is the unique solution of the equation $$\frac{dx}{dt}(t) = g(x(t),i)=(g_1(x(t),i),\dots, g_3(x(t),i))^T$$ with $x(t=0)=x_0$.


\me Now we can construct a compact set $B$, positively invariant by the flows $(\phi^i)_{i\in \{0,1\}}$.

\begin{lemme}
	\label{compact}
	Let $B \subset E$ be the compact set $$B = [b_1,B_1] \times [b_2,\frac{c_1}{d}] \times [0,\frac{\cm{1}}{d_M}]$$ with $$ \displaystyle b_1 = \frac{a}{a+q_1 + q_2\, \frac{c_1}{d} + q_3 \,\frac{\cm{1}}{d_M} }
	\quad \text{,} \quad b_2 = \frac{c_1 \, b_1}{d \, (c_2 \,\frac{c_1}{d} + c_3 \,\frac{\cm{1}}{d_M} )}
	$$ and $$B_1=\frac{a}{a+q_1},$$ where the parameters are defined in \eqref{deftx} and satisfy the assumptions $(H1)$ and $(H2)$. 
	
	\medskip Then $B$ is positively invariant by the flows $\phi^i$, $i\in \{0,1\}$, i.e,
	$$\forall i\in \{0,1\},\; \forall t \geq 0, \quad \phi^i_t(B) \subset B. $$
	Moreover for any initial condition in $\tot{E}$, there exists $t \geq 0$ such that $$(X(t),I(t)) \in \tot{B}.$$ 
\end{lemme}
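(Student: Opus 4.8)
\emph{Strategy.} The statement splits into a positive-invariance claim and a finite-time absorption claim, and I would prove them in that order, the first feeding the second.

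\emph{Positive invariance.} Since $B$ is a compact box (a product of intervals) and each $\phi^i$ is generated by the smooth field $g(\cdot,i)$, I would invoke the standard tangency (Nagumo) criterion: $B$ is positively invariant under $\phi^i$ as soon as, on each of its six bounding faces, the component of $g(\cdot,i)$ along the outward normal is $\le 0$, for both $i\in\{0,1\}$. This reduces the claim to six one-sided scalar inequalities. Five of them are immediate from the uniform bounds built into \eqref{deftx}. On $x_1=B_1$ one has $g_1=a-(a+q(x_2,x_3))x_1\le a-(a+q_1)B_1=0$ because $q(x_2,x_3)\ge q_1$; on $x_1=b_1$ one uses instead $x_2\le c_1/d$ and $x_3\le \cm{1}/d_M$ to bound $q$ from above by $a/b_1-a$, whence $g_1\ge0$ by the definition of $b_1$. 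On $x_2=c_1/d$, $g_2=r(x_2,x_3)x_1-dx_2\le c_1\cdot B_1-c_1<0$ since $r\le c_1$ and $x_1\le B_1<1$. Finally $g_3=r_M(x_2,x_3)\,i-d_Mx_3$ gives $g_3\ge0$ on $x_3=0$ (both values of $i$), and on $x_3=\cm{1}/d_M$ it equals $r_M\,i-\cm{1}\le0$ since $r_M\le\cm{1}$.

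\emph{The delicate face.} The only inequality that is not automatic is the lower $x_2$-face: I must show $g_2=r(b_2,x_3)x_1-d\,b_2\ge0$ for all $x_1\in[b_1,B_1]$, $x_3\in[0,\cm{1}/d_M]$. As $r$ is decreasing in both arguments, the worst case is $x_1=b_1$ and $x_3=\cm{1}/d_M$, so it suffices that $r(b_2,\cm{1}/d_M)\,b_1\ge d\,b_2$. The constant $b_2$ is precisely the lower bound on the $x_2$-equilibrium obtained by feeding these worst-case values into $r$; unwinding its definition collapses the inequality to the manifestly true bound $b_2\le c_1/d$ (valid since $b_1<1$). This coupling of the three coordinates is where the argument is least mechanical, and it is the main obstacle for the invariance half.

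\emph{Absorption into $\tot{B}$.} For the second assertion I would use scalar comparison (Gronwall) coordinatewise, exploiting that the $X_3$–dynamics decouple from $X_1$. Pathwise one has $\dot X_3\le \cm{1}-d_MX_3$, $\dot X_1\le a-(a+q_1)X_1$ and $\dot X_2\le c_1-dX_2$, which place $\limsup X_j$ below the upper faces. To upgrade this to entry into the \emph{closed} box in finite time I would order the coordinates: first trap $X_3$, using that $I$ returns to $0$ and stays there for sojourns of arbitrarily large length (the rates $a_M>0$, $q_M\ge q_{1,M}>0$ make these sojourns exponential, hence a.s.\ unbounded); during a long enough $I=0$ sojourn $\dot X_3=-d_MX_3$ forces $X_3\le\cm{1}/d_M$, after which the invariance of the $x_3$-interval keeps it there. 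Once $X_1\le B_1$ (so $rX_1\le c_1B_1<c_1$) the $X_2$-estimate sharpens to $\dot X_2\le c_1B_1-dX_2$, whose equilibrium $c_1B_1/d$ is \emph{strictly} inside $[b_2,c_1/d]$, giving finite-time entry; the symmetric lower estimates $\dot X_1\ge a-(a+q(x_2,x_3))X_1$ and $\dot X_2\ge r(x_2,x_3)x_1-dX_2$, read off once the upper bounds hold, push $X_1,X_2$ above $b_1,b_2$. Combined with positive invariance, all three coordinates lie in their intervals simultaneously from a finite time on, i.e.\ $(X(t),I(t))\in\tot{B}$.

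\emph{Expected difficulty.} Beyond the coupled lower-$x_2$ face, the genuine pressure point is upgrading ``$\limsup$ below the upper faces'' to entry into the closed box: this is clean for $X_3$ through the switching of $I$ and for $X_2$ through the strict gap $c_1B_1/d<c_1/d$, and for $X_1$ it rests on the strict inequality $a/(a+q(x_2,x_3))<B_1$, which holds as soon as $(X_2,X_3)\neq(0,0)$ along the orbit; care is needed to see that the coupling keeps these equilibria strictly interior so that the crossing times are finite rather than merely asymptotic.
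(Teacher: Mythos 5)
Your invariance half is correct but takes a genuinely different route from the paper. The paper never invokes a tangency criterion: it runs coordinatewise comparison arguments, bounding each component between explicit solutions of scalar linear ODEs (for the lower $x_2$-bound, the comparison equation $\dot z = \tfrac{c_1 b_1}{1 + c_2 c_1/d + c_3 c_{1,M}/d_M} - d z$), and reads invariance off the fact that these comparison solutions preserve the corresponding intervals. Your Nagumo face-by-face check is cleaner and more systematic, with one caveat that is the paper's fault rather than yours: your ``delicate face'' inequality $r(b_2, c_{1,M}/d_M)\,b_1 \ge d\,b_2$ collapses to $c_2 b_2 \le c_2 c_1/d$ only if $b_2 = \tfrac{c_1 b_1}{d\,(1 + c_2 c_1/d + c_3 c_{1,M}/d_M)}$, i.e.\ with a ``$1+$'' in the denominator. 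The lemma as printed omits the $1$ (and with that literal $b_2$ the face inequality genuinely fails, and $B$ can even be empty), but the paper's own comparison ODE contains it; your reading is the intended one and should be stated explicitly.

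On the absorption claim you are more careful than the paper, and the ``pressure point'' you flag is a genuine gap --- one that the paper's own proof does not survive either. The paper deduces ``$\exists t_1$ such that $X_2(t) \le c_1/d$ for all $t \ge t_1$'' directly from the bound $X_2(t) \le \tfrac{c_1}{d}(1-e^{-dt}) + X_2(0)e^{-dt}$; when $X_2(0) > c_1/d$ this bound stays strictly above $c_1/d$ for every finite $t$, so the deduction is invalid as written, and the same objection applies to its treatment of $X_1$ and $X_3$. Your repairs (long $I=0$ sojourns to trap $X_3$; the strict gap $c_1 B_1/d < c_1/d$ to trap $X_2$ once $X_1 \le B_1 + \epsilon$, which holds in finite time) are exactly what is needed and are absent from the paper. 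However, for $X_1$ the gap cannot be closed in full generality: hypothesis $(H2)$ allows $q_2 = q_3 = 0$, in which case $q \equiv q_1$, the interval $[b_1,B_1]$ degenerates to the single point $a/(a+q_1)$, and $X_1(t) = B_1 + (X_1(0)-B_1)e^{-(a+q_1)t}$ reaches it only asymptotically --- so the finite-time entry statement of the lemma is simply false whenever $X_1(0) \neq B_1$. Note also that your sufficient condition ``$(X_2,X_3)\neq(0,0)$ along the orbit'' is not the right one: what is needed is that $q_2 X_2 + q_3 X_3$ be bounded below on recurrent time intervals, which forces $q_2 + q_3 > 0$ and requires the quantitative lower bounds on $X_2$ (or recurrent lower bounds on $X_3$ via the $I=1$ sojourns) that you only sketch. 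To make the lemma true one must either add such a hypothesis or weaken the conclusion to entry into every neighbourhood of $B$; your proposal, unlike the paper, at least makes this visible.
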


\begin{proof}
	Let us first note that the first two components of the vectors field $g(.,i)$ are independent of $i$. Thus 
	$$a.s. \quad \forall t \geq 0, \quad X_1(t) \leq 1 $$ and $$a.s. \quad \forall t \geq 0, \quad X_2(t) \leq \frac{c_1}{d}(1-e^{-dt}) + X_2(0) e^{-dt}.$$ We deduce that $$\forall (x_0,i_0) \in \tot{E}, \quad \exists t_1 \geq 0\; \text{ such that }\; \forall t \geq t_1, \quad X_2(t)  \leq \frac{c_1}{d}.$$
	
	\me Similarly, we know that,
	$$a.s. \quad \forall t \geq 0, \quad I(t) \leq 1 \;\; \text{ and } \;\; X_i(t)\geq0 \quad \text{ for } i \in \{2,3\}$$
	We deduce that almost surely $$\forall t \geq 0, \quad X_3(t) \leq \frac{\cm{1}}{d_M}(1-e^{-d_Mt}) + X_2(0) e^{-d_Mt} \;\; \text{ and } \;\; X_1(t)\leq \frac{a}{a+q_1}(1-e^{-(a+q_1)t}) + X_1(0) e^{-(a+q_1)t}$$ and then $$\forall (x_0,i_0) \in \tot{E}, \quad \exists t_2 \geq 0\; \text{ such that }\; \forall t \geq t_2, \quad  X_3(t)  \leq \frac{\cm{1}}{d_M}  \;\; \text{ and } \;\; X_1(t)\leq M_1. $$ 
	
	\me By similar arguments, we obtain
	$$\forall (x_0,i_0) \in \tot{E}, \quad \exists t_3 \geq 0\; \text{ such that }\; \forall t \geq t_3, \quad X_2(t)  \geq m_2 \quad \text{ and } \quad X_1(t)  \geq m_1. $$ 
	Indeed the functions $$ \displaystyle t \to m_1 \,(1-e^{\displaystyle-\big( a+q_1+q_2\, \frac{c_1}{d} + q_3 \,\frac{\cm{1}}{d_M} \big) \, t}) + x_1(0) e^{\displaystyle-\big( a+q_1+q_2\, \frac{c_1}{d} + q_3 \,\frac{\cm{1}}{d_M} \big) \,t} $$ and $$ t \to m_2 \, (1-e^{\displaystyle-dt}) + x_2(0) e^{\displaystyle-dt}$$ are respectively solutions of the following equations
	$$\frac{dy(t)}{dt}= a - (a+q_1 + q_2\frac{c_1}{d}+q_3\frac{c_{1,M}}{d_M})y(t) \quad \text{ and } \quad \displaystyle\frac{dz(t)}{dt}= \displaystyle\frac{c_1 m_1}{1+\displaystyle\frac{c_2c_1}{d} + \displaystyle\frac{c_3 \,c_{1,M}}{d_M}} - dz(t).
	$$
	
	Thus, the existence of a positively invariant compact set with respect to the flow $\phi^i$ has been proven for $i\in \{0,1\}$.
\end{proof}

\bigskip\noindent Without loss of generality, we assume in the following that $$(X(0),I(0))=(x_0,i_0) \in \tot{B}.$$

\me Let us define, as in \cite{benaim2015qualitative}, the notion of accessible point for the process $(X,I)$.

\me For all $n \in \bbN^*$, we define $$\bbT_n = \{ (\bar{i},\bar{u})=(i_1,\dots,i_{n}),(u_1,\dots,u_{n}) \in \{0,1\}^n \times \Rp{n} \}.$$ Then the trajectories of $(X,I)$ can be written using the flows $(\phi^i)_i$ as follows, \\ for $x\in B$ and $(\bar{i},\bar{u}) \in \bbT_n$, $$\Phi^{\bar{i}}_{\bar{u}}(x) = \phi^{i_{n}}_{u_{n}}\circ \dots \circ \phi^{i_1}_{u_1}(x).$$
For any $x \in B$, we define the positive orbits of $x$ by $$\gamma^+(x) = \{\Phi^{\bar{i}}_{\bar{u}}(x) : (\bar{i},\bar{u}) \in \displaystyle \bigcup_{n \in \bbN^*} \bbT_n \}.$$

\me A point $x$ is accessible from a singleton $\{y\}$ if $x\in \overline{ \gamma^+(y)}$. In a more general way, the set of accessible points is defined by \begin{equation}\label{gamma}
	\Gamma = \bigcap_{x \in B} \overline{ \gamma^+(x)}.
	\end{equation}

\me The following lemma allows us to describe more precisely $\Gamma$.
\begin{lemme}
	\label{thp}
	\begin{enumerate}
		\item The set of accessible points for the process $(X,I)$ is given by $\Gamma = \overline{ \gamma^+(p)}$ with $p=(p_1,p_2,0)\in B$ such that
	\begin{itemize}
		\item[$\bullet$] If $q_2 =0$, then
		
		$$ p_1=\frac{a}{a+q_1} \quad \text{ and } \quad  p_2 =\left\lbrace   \begin{array}{cl}
		\frac{c_1}{d}\, p_1 &\text{ if } c_2=0\\ 
		\\\displaystyle\frac{-d + \sqrt{d(d+4c_1c_2\,p_1)}}{2d\, c_2}  &\text{ else } \\
		\end{array}
		\right. .$$
		\item[$\bullet$] If $q_2 \neq 0$ and $c_2=0$, then
		
		$$\left\lbrace   \begin{array}{l}
		p_1= \displaystyle\frac{d}{2\,q_2 c_1}\, \Big(  \sqrt{(a+q_1)^2 +\frac{4aq_2c_1}{d}} - (a+q_1) \Big)\\ 
		\\p_2 =\displaystyle\frac{c_1}{d}\, p_1 \\
		\end{array}
		\right. .$$
		
		\item[$\bullet$] If $q_2 \neq 0$ and $c_2 \neq 0$ then $p_1$ is the unique solution of the following equation
		$$p_1=\frac{2dc_2a}{2dc_2(a + q_1 ) +q_2\, (\sqrt{d(d+4c_1c_2\,p_1)}-d)}$$ and $$p_2 = \frac{-d + \sqrt{d(d+4c_1c_2\,p_1)}}{2d\, c_2} .$$
	\end{itemize}
	
	\item The support of any invariant measure of the process $(X,I)$ is included in $\Gamma$. Moreover there exists an invariant probability measure with support equal to $\Gamma$.
	
	\end{enumerate}
\end{lemme}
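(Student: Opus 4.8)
The plan is to analyze the two flows $\phi^0$ and $\phi^1$ and locate the accessible set $\Gamma$ by a careful study of their equilibria and the reachability structure they induce on the compact invariant set $B$. Recall that the components $X_1,X_2$ obey $i$-independent dynamics while $X_3$ is driven by $i$: under $\phi^0$ we have $g_3=-d_M x_3$, so $x_3$ decays to $0$, whereas under $\phi^1$, $x_3$ relaxes toward the positive equilibrium $r_M(x_2,x_3)/d_M$. The first step I would take is to observe that, because the switch rates $a_M$ and $q_M$ are strictly positive on $B$ (by $(H1)$ and $(H2)$), both values $i=0$ and $i=1$ are visited and, crucially, arbitrarily long sojourns in state $i=0$ occur with positive probability. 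Following such a long sojourn, the third coordinate can be driven as close to $0$ as desired, so any accessible point must have $x_3=0$; this pins the third coordinate of $p$ to $0$.

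\medskip\noindent The next step is to identify the planar dynamics of $(x_1,x_2)$ along $x_3=0$ and to show it has a unique globally attracting equilibrium $(p_1,p_2)$ in the relevant region. On $\{x_3=0\}$ the equations reduce to
\begin{align*}
\dot x_1 &= a-(a+q_1+q_2 x_2)x_1,\\
\dot x_2 &= r(x_2,0)x_1 - d x_2 = \frac{c_1 x_1}{1+c_2 x_2}-d x_2,
\end{align*}
and I would locate $(p_1,p_2)$ as the unique positive fixed point of this planar system. Setting $\dot x_2=0$ gives $x_1=\tfrac{d}{c_1}x_2(1+c_2 x_2)$, and substituting into $\dot x_1=0$ yields the quadratic $2dc_2 x_2 + d = \sqrt{d(d+4c_1c_2 p_1)}$-type relation; solving the quadratic in $x_2$ produces exactly the three case-distinctions stated ($q_2=0$, then $q_2\neq 0,c_2=0$, then the general case), with the explicit formulas for $p_1,p_2$. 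I would verify monotonicity/sign of the vector field to conclude uniqueness of the positive root and hence of $p$, and check $p\in B$ using the bounds $b_1,B_1,b_2$ from Lemma \ref{compact}.

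\medskip\noindent With $p$ identified, the heart of the argument is the inclusion $\Gamma=\overline{\gamma^+(p)}$. One direction, $\Gamma\subset\overline{\gamma^+(p)}$, is immediate from the definition \eqref{gamma} once $p$ is shown accessible from every $x\in B$; the other direction requires showing $p\in\overline{\gamma^+(x)}$ for all $x$. I would do this by concatenating controls: first use a long $i=0$ sojourn to push $x_3$ near $0$, then use the global attractivity of $(p_1,p_2)$ for the planar $i$-independent flow to drive $(x_1,x_2)$ near $(p_1,p_2)$ while keeping $x_3$ small, invoking continuous dependence on initial conditions and the positivity of the switching rates to guarantee such a trajectory has positive probability. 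For part (2), the inclusion of the support of any invariant measure in $\Gamma$ is the standard accessibility argument (the support is invariant and contained in $\overline{\gamma^+(x)}$ for $\pi$-a.e.\ $x$, hence in the intersection $\Gamma$); existence of an invariant measure with support exactly $\Gamma$ follows from compactness of $B$, the Feller property of the PDMP, and a Krylov--Bogolyubov averaging argument, together with the fact that $\Gamma$ is reached from everywhere.

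\medskip\noindent The main obstacle I anticipate is the rigorous control-theoretic construction showing $p$ is accessible from every starting point \emph{simultaneously} in all coordinates: one must drive $x_3\to 0$ (which forces a long quiescent phase) and $(x_1,x_2)\to(p_1,p_2)$ without the two requirements interfering, since during the $i=1$ phases needed for nothing in the $x_3=0$ target, $x_3$ would grow again. The clean way around this is to exploit that the $(x_1,x_2)$-subsystem is \emph{decoupled} from $i$ and from $x_3$, so one can first let $(x_1,x_2)$ converge under either flow and only afterward spend a long $i=0$ interval to collapse $x_3$, reversing the naive order; formalizing that this ordering yields a point in $\overline{\gamma^+(x)}$, together with the transversality/bracket verification that underlies the absolute continuity and the exponential mixing in Theorem \ref{th-mes-inv}, is where the real work lies.
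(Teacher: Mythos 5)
There is a genuine gap at the mathematical heart of the argument. You twice invoke the \emph{global attractivity} of $(p_1,p_2)$ for the planar system on $\{x_3=0\}$ (``show it has a unique globally attracting equilibrium'', then ``use the global attractivity of $(p_1,p_2)$''), but you never prove it: what you outline is only the computation showing the fixed point is \emph{unique}. In the plane, uniqueness of an equilibrium does not exclude limit cycles, and without excluding them you cannot conclude that $\phi^0$-trajectories from every $x\in B$ approach $p$ --- which is exactly the fact needed to get $p\in\overline{\gamma^+(x)}$ for all $x$, hence $p\in\Gamma$ and $\overline{\gamma^+(p)}\subset\Gamma$. The paper closes this step with Poincar\'e--Bendixson on the positively invariant compact set $\widehat{B}=[b_1,B_1]\times[b_2,\tfrac{c_1}{d}]$ from Lemma \ref{compact}, combined with a Bendixson--Dulac argument: the divergence of the planar field $G$ is strictly negative, so $G$ has no periodic orbit (Proposition \ref{Hulin} in the Annex), and every trajectory must converge to the unique equilibrium. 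Some substitute for this dichotomy-plus-divergence argument is indispensable; your proposal is missing it.

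Two of your supporting claims are also false as stated, and one of them undermines your fix for the ``obstacle'' you anticipate. First, ``any accessible point must have $x_3=0$'' cannot be right: $\Gamma=\overline{\gamma^+(p)}$ contains the $\phi^1$-orbit of $p$, along which $x_3>0$ (indeed Theorem \ref{th-mes-inv} gives an invariant measure absolutely continuous with respect to Lebesgue measure with support $\Gamma\times\{0,1\}$, so $\Gamma$ has nonempty interior). What is true is only that the generating point $p$, as the equilibrium of $g(\cdot,0)$, has $p_3=0$ because $g_3(p,0)=-d_M p_3$ and $d_M>0$. Second, your claim that the $(x_1,x_2)$-subsystem is ``decoupled from $i$ and from $x_3$'' is precisely what hypothesis \eqref{hypdep} ($c_3+q_3>0$) forbids: under it, $q$ or $r$ depends on $x_3$, so the planar dynamics is \emph{not} autonomous while $x_3>0$. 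The correct resolution of the interference issue is the paper's: under $\phi^0$ alone, $x_3(t)=x_3(0)e^{-d_M t}\to 0$, so the $(x_1,x_2)$-dynamics is asymptotically autonomous with limiting field $G$, and the Poincar\'e--Bendixson/divergence argument gives convergence to $(p_1,p_2)$; a single long $i=0$ phase accomplishes both tasks, and no probabilistic statements (``positive probability of long sojourns'') are needed, since $\Gamma$ is defined purely through compositions of the flows $\Phi^{\bar i}_{\bar u}$. Two smaller points: the inclusion $\Gamma\subset\overline{\gamma^+(p)}$ is immediate from \eqref{gamma} simply because $p\in B$, not ``once $p$ is shown accessible''; and your Krylov--Bogolyubov sketch for part (2) is essentially the content of the result the paper cites (Benaïm et al., Proposition 3.17(i)), so that part is acceptable in outline.
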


\begin{proof}
	\begin{enumerate}
		\item 
	By definition of $\Gamma$, we know that for any $p \in B$, $\Gamma \subset \overline{ \gamma^+(p)}.$
	
	\me We will show that $\overline{ \gamma^+(p)}\subset\Gamma$ for $p$ unique solution of the equation $g(p,0)=0.$ 
	Let us start by showing the uniqueness of such an equilibrium. We will only detail here the case where $c_2 \neq 0$ and $q_2\neq 0$. The other cases can be proved by similar arguments.
	
	\me From the strict positivity of the constant $d$ and the expression of the function $g_3(p,0)$ we deduce that $p_3=0$. Then the couple $(p_1,p_2)$ is solution of the system $$\left\lbrace   \begin{array}{l}
	0 = a(1-p_1) - (q_1 + q_2 \, p_2 )\, p_1\\ 
	\\ 0 = \displaystyle\frac{c_1 \, p_1}{1+ c_2\, p_2} -d \, p_2 \\
	\end{array}
	\right. .$$
	Thus the real $p_2$ is a positive root of the polynomial $P(x) = dc_2 \, x^2 + d\, x - c_1 \, p_1$ whose discriminant $\Delta= d^2 +4c_1 d c_2 \,p_1$ is strictly positive. Therefore $p_2$ is uniquely defined, according to $p_1$, as the only positive root of $P$,
	$$p_2=p_2(p_1) = \frac{-d + \sqrt{d(d+4c_1c_2\,p_1)}}{2d\, c_2}.$$ Moreover, $p_1$ is the unique positive solution of the equation $$p_1 = \frac{a}{a+q_1 +q_2 \, p_2(p_1)}. $$
	Indeed, the function $p_1\in[0,1] \mapsto \displaystyle\frac{2dc_2a}{2dc_2(a + q_1) +q_2 \, (\sqrt{d(d+4c_1c_2\,p_1)}-d)}\in ]0,\frac{a}{a+q_1}]$ is strictly decreasing. So it intersects the identity function, which is strictly increasing, in a single point between $0$ and $1$. 
	
	\me To conclude it is enough to show that for every $\cU$ neighborhood of p, $$\forall x \in B, \; \exists u \in \Rp{} \quad \text{such that} \; \phi^{i=0}_u(x)\in \cU.$$
	The decrease towards $0$ of the third component of the flow $t \mapsto \phi^0_t(x)$ for any $x \in B$ is clear since the dynamics of this component is given by $\forall t \geq0, \quad \frac{dx_3(t)}{dt}=-d_M \,x_3(t)$.
	
	\medskip Thus we just have to study the behavior of the flow associated with the vectors field of $\Rp{2}$: $G=(g_1(.,.,.,0,0),g_2(.,.,.,0,0) )$.
	We have shown in Lemma \ref{compact} that this flow is contained in the compact set $$\widehat{B}=[b_1,B_1]\times [b_2,\frac{c_1}{d}]. $$ Then according to Poincaré-Bendixson's Theorem \cite{bendixson1901courbes}, either the vector field $G$ admits a periodic orbit, or for any initial condition belonging to $\widehat{B}$, the flow associated with $G$ converges to the unique stationary point belonging to $\widehat{B}$, i.e. $(p_1,p_2)$.
	
	The divergence of the vector field $G$ is not zero out of the compact set $\widehat{B}$ :
	$$div\,G(x) = -a-q_1\, x_2 - \frac{c_1c_2\, x_1}{(1+c_2 \, x_2)^2}-d <0.$$
	Therefore $G$ does not admit a periodic orbit (see Proposition \ref{Hulin} in Annex). 
	
	\me Hence we showed that $p \in \Gamma$ and then we finally conclude that
	$$ \overline{ \gamma^+(p)}=\Gamma.$$ 
	
	\item We have proved that $\Gamma \neq \emptyset$. Hence that ends the proof using \cite{benaim2015qualitative} Proposition 3.17 (i).
\end{enumerate}

\end{proof}

The set of accessible points $\Gamma$ gives us information on the support of any invariant probability measure of the PDMP $(X,I)$. Now we will prove, using \cite{benaim2018user} Corollary 2.7 and the previous lemmas, the existence and uniqueness of such a measure. Let us now prove Theorem \ref{th-mes-inv}.

\begin{proof}[Proof of Theorem \ref{th-mes-inv}]
	\me Following \cite{benaim2018user} Corollary 2.7, we need to check two conditions.

		\me The first condition is easy to satisfy.It consists in showing the existence of $s \in \bbR$ and $p \in \Gamma$ such that $s \,g(p,0) + (1-s) \, g(p,1) =0$. By Lemma \ref{thp}, we easily show that this condition is satisfied by the unique stationary point $p$ of $g(.,0)$ and $s=1$.
	
		\me Let us introduce a sequence of sets of vector fields, $\cG_0=\{g(.,i)\}_{i\in \{0,1\}}$ and for $j \geq1$, $\cG_{j+1} = \cG_j \cup \{ \,[g(.,i),V] \; ; \;\; V \in \cG_j, \; i\in \{0,1\} \; \}$ where $$[V,W](x) = DW(x) V(x) -DV(x)W(x), \quad \text{for } x \in \bbR^{3}.$$ The second condition (called weak bracket condition) is satisfied if \begin{equation}
		\label{braket}\exists x\in \Gamma \text{ such that } \quad Vect \{\, V(x) \; ; \quad V \in \displaystyle  \bigcup_{j \geq 0} \cG_j \, \} = \bbR^3.
		\end{equation}
		
\bigskip Once \eqref{braket} is checked, we apply \cite{benaim2018user} Corollary 2.7 and show the existence and uniqueness of an invariant probability measure $\pi$ for $(X,I)$. Moreover $\pi$ is absolutely continuous with respect to Lebesgue's measure and the exponential convergence \eqref{cvexp} holds.
	
	\me Then we deduce from Lemma \ref{thp}, that the support of this measure is given by $\tot{\Gamma}$.

	\me Let us prove \eqref{braket}.
	
	\me To simplify notation we introduce the following two functions $$ T(x)= 1+c_2\, x_2 + c_3\, x_3>0, \quad T_M(x)=1+c_{2,M}x_2+c_{3,M}x_3>0.$$
	Then the vector field $g(.,i)$ can be re-written for any $(x,i)\in \cE$, as follows, \begin{equation*}
	\label{g2}
	g(x,i) =\big( a(1-x_1) -q(x_2,x_3) x_1,\; \frac{c_1x_1}{T(x)} - d x_2,\; \frac{c_{1,M}}{T_M(x)} i - d_M x_3 \big)^T.
	\end{equation*}
	Its Jacobian matrix is given by
	$$Dg(.,i)(x)=\left(\begin{array}{ccc} -a-q(x_2,x_3) & -q_2\,x_1 & -q_3\, x_1\\ 
	\\ \displaystyle\frac{c_1}{T(x)} & -c_2\displaystyle \frac{ c_1 \, x_1}{T(x)^2} -d & -c_3\displaystyle\frac{ c_1\,x_1}{T(x)^2}\\
	\\ 0 & \displaystyle\frac{-c_{1,M} c_{2,M} \,i}{T_M(x)^2} & \displaystyle\frac{-c_{1,M} c_{3,M} \,i}{T_M(x)^2}  -d_M\end{array} \right).$$
	We know that
	$$ \forall x \in M, \quad [g(.,0),g(.,1)](x) = Dg(.,0)(x) \,g(x,1) -Dg(.,1)(x) \,g(x,0).$$
	Hence, computation gives
	$$[g(.,0),g(.,1)](x) = \Big( -q_3x_1\frac{c_{1,M}}{T_M(x)}, -c_3\frac{c_1x_1c_{1,M}}{T(x)^2\,T_M(x)},V_3(x)\Big)
	$$
	with $V_3(x)=-d_M \displaystyle\frac{c_{1,M}}{T_M(x)} + \frac{c_{1,M} c_{2,M} \,i}{T_M(x)^2} \Big(\frac{c_1x_1}{T(x)} - d x_2\Big) - \frac{c_{1,M} c_{3,M} \,i}{T_M(x)^2} d_M x_3 $.
	
	\me To check \eqref{braket}, we have to prove the existence of a point $x \in \Gamma$ such that
	$$\forall (\alpha_1,\alpha_2)\in \bbR^2, \quad [g(.,0),g(.,1)](x) \neq \alpha_1g(x,0)+\alpha_2g(x,1).
	$$
	To this end, we assume by contradiction that such point $x$ does not exist. 
	
	\me In other words we assume that for all $x\in \Gamma$, there exists $\alpha \in \bbR$ such that \begin{equation}\label{absurde1}
	\left\lbrace   \begin{array}{l}
	-q_3 x_1 \displaystyle \frac{ c_{1,M}}{T_M(x)} = \alpha \Big(a(1-x_1) - q(x_2,x_3)\, x_1\Big)\\ 
	\\- c_3\displaystyle \frac{c_1x_1c_{1,M}}{T(x)^2\,T_M(x)} = \alpha \Big(\frac{c_1\, x_1}{T(x)}-d\, x_2\Big) \\
	\end{array}
	\right. .\end{equation}
	Then for all $x \in \Gamma$, such that
	\begin{equation}
	\label{cond}
	a(1-x_1) \neq q(x_2,x_3)\, x_1,
	\end{equation} we obtain
	\begin{equation*}
	\frac{c_1c_3}{T(x)^2} = \frac{q_3}{a(1-x_1) - q(x_2,x_3)\, x_1} \Big(\frac{c_1\, x_1}{T(x)}-d\, x_2\Big).
	\end{equation*}
	According to \eqref{hypdep}, we deduce that for every $x \in \Gamma$ satisfying \eqref{cond},   \begin{equation}\label{absurde}x_1 = \frac{c_1c_3a + dq_3T(x)^2 x_2}{q_3 c_1 T(x)+c_1c_3(a+q(x_2,x_3))}.\end{equation}
	
	\me There are two different cases :
	\begin{itemize}
		\item[$\bullet$] $q_2\neq0$ or $q_3\neq0$. Let us introduce two points, $x=\phi^{i=1}_t(p)$ with $t>0$ and $y=\phi^{i=0}_u(x)$ with $u>0$, belonging to $\Gamma$. For $t$ large enough and $u$ small enough, the points $x$ and $y$ satisfy \eqref{cond}. Hence the points $x$ and $y$ also satisfy \eqref{absurde}. Otherwise we would get a contradiction with \eqref{absurde1}.
		
		\me Using that the vector field $g(x,i)$ depends on $i$ only through its third component, by definition of $y=\phi^{i=0}_u(x)$, we can easily find $u$ small enough such that $x_1=y_1$, $x_2=y_2$ and $x_3\neq y_3$. Hence we will show the existence of $u$ small enough for which $y=\phi^{i=0}_u(x)$ and $x$ do not both check \eqref{absurde}. Then we will have a contradiction with \eqref{absurde1} and \eqref{braket} will be shown.
		
		\me In order to study the variation of the function \begin{equation}\label{fct}x_3 \mapsto \frac{c_1c_3a + dq_3T(x)^2 x_2}{q_3 c_1 T(x)+c_1c_3(a+q(x_2,x_3))}\end{equation} let us re-write it as follows $$
		z \mapsto \frac{S_1 + S_2(1+S_3\, z)^2}{S_4+S_5\, z}
		$$ where $S_i\geq0$ and $S_4=q_3c_1(1+c_2x_2)+c_1c_3(a+q_1+q_2x_2) >0$.
		
		\me The derivative of such a function admits as numerator the following polynomial of degree up-bounded by $2$
		$$P(z)=2S_2S_3(1+S_3z)(S_4+S_5z) - (S_1+S_2(1+S_3z)^2)S_5.$$
		Such a polynomial cannot be equal to zero on the whole interval $[x_3,y_3]$, without admitting an infinity number of roots and hence being the zero polynomial on $\Rp{}$. Hence if $c_3 \neq0$, then the function \eqref{fct} cannot be constant. Thus if $c_3 \neq0$, there exists $u$ small enough such that $y \in \Gamma$ cannot check both conditions \eqref{cond} and \eqref{absurde}.
		
		\me Finally, if $c_3 =0$, by \eqref{hypdep}, $q_3\neq0$. In this case, there exists $t>0$ such that
		$$g_1(\phi^{i=1}_t(p),0)\neq0 \quad \text{and} \quad g_2(\phi^{i=1}_t(p),1)\neq 0.$$ Hence the condition \eqref{absurde1} cannot be satisfied in $x=\phi^{i=1}_t(p)$ since $x_1>0$ and $c_{1,M}>0$.
		
		\item[$\bullet$] $q_2=q_3=0$. In this case, by Lemma \ref{compact}, we know that there does not exist $x \in \Gamma$, satisfying \eqref{cond}. Indeed, for any $x \in \Gamma$, $x_1=\frac{a}{a+q_1}$. 
		
		\me In this case, the first component of $X$ is constant. Hence we can see the process $X$ as a two dimensional process. Then to check \eqref{braket}, we only need to prove the existence of $x\in \Gamma$ such that $$ \forall \alpha \in \bbR, \quad (g_2(x,0),g_3(x,0)) \neq \alpha\,(g_2(x,1),g_3(x,1)).$$ 
		\me We know that there exists $t>0$ such that $x=\phi^{i=1}_t(p)\in \Gamma$ satisfies this condition, for $p$ defined in Lemma \ref{thp}. Indeed, for any $z \in \Gamma$, $g_2(z,0)=g_2(z,1)$ and $g_3(z,0) \neq g_3(z,1)$. Hence, for $t$ large enough, we obtain that $g_2(x,0) =g_2(x,1) \neq 0$ and that $g_3(x,0) \neq g_3(x,1)$.
	\end{itemize}
	
\end{proof}

\section{Identification of the invariant probability measure $\pi$}
\label{chap3:sect:pers}

 We will finally give a description of the invariant probability measure using a stationary system of partial differential equations.
 
\begin{theorem}
	\label{ident}
	 We suppose that assumptions of Theorem \ref{th-mes-inv} are satisfied. Then there exists a unique couple of positive and integrable functions $(h_0,h_1)$, with support included in $B$, weak solution of the following stationary system of partial differential equations, for any $x \in int(B)= ]b_1, B_1[ \times]b_2, \frac{c_1}{d}[ \times ]0, \frac{c_{1,M}}{d_M}[$,
	\begin{equation}
	\label{h}
	\left\lbrace \begin{array}{lll}
	g(x,0)^T \nabla h_0(x) + h_0(x) \;\displaystyle \sum_{j=1}^3\partial_jg_j(x,0) &= q_M(x_2,x_3)\,h_{1}(x) - a_M h_0(x)\\ \\
	g(x,1)^T \nabla h_1(x) + h_1(x) \;\displaystyle \sum_{j=1}^3\partial_jg_j(x,1)  &= a_M\,h_0(x) - q_M(x_2,x_3) h_1(x)
	\end{array}\right.
	\end{equation} such that for any $i \in \{0,1\}$,  $$\pi(dx,\{i\}) = \displaystyle \sum_{j=0}^1 \delta_{ij} h_j(x) dx \quad \text{ and }\quad  \int_B (h_0(x)+h_1(x)) dx = 1.$$
	Here, $\delta_{ij}$ represents the Dirac measure, $\delta_{ij} = \left\lbrace \begin{array}{ll} 1 & \text{if } i=j \\ 0 & \text{else}\end{array}\right. .$

\end{theorem}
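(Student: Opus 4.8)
The plan is to recognise that Theorem \ref{ident} is, in essence, a reformulation in PDE language of the existence and uniqueness of the invariant measure $\pi$ obtained in Theorem \ref{th-mes-inv}. By that theorem $\pi$ exists, is unique, is absolutely continuous with respect to Lebesgue's measure, and has support $\tot{\Gamma}$. Since $B$ is closed and positively invariant (Lemma \ref{compact}) and $\Gamma=\overline{\gamma^+(p)}$ with $p\in B$ (Lemma \ref{thp}), one has $\Gamma\subset B$; hence writing $\pi(dx,\{i\})=h_i(x)\,dx$ produces two non-negative integrable densities $h_0,h_1$ with support contained in $B$ and $\int_B(h_0+h_1)\,dx=1$. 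It then remains to show that $(h_0,h_1)$ is a weak solution of \eqref{h}, and that this property together with the normalisation determines the pair uniquely.

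For existence I would use that, $C^1_b(\cE)$ being a core for the generator $\cL$ of the PDMP (see \cite{davis1993markov}; well-posedness of the martingale problem follows from the pathwise uniqueness established in Theorem \ref{cvK}), stationarity of $\pi$ is equivalent to
\[
\int_{\cE}\cL f\,d\pi=\sum_{i=0}^1\int_B \cL f(x,i)\,h_i(x)\,dx=0,\qquad \forall f\in C^1_b(\cE).
\]
Inserting \eqref{L} and separating the contributions of $i=0$ and $i=1$, the switching terms reorganise into $q_M(x_2,x_3)\,h_1-a_M\,h_0$ and $a_M\,h_0-q_M(x_2,x_3)\,h_1$, while the transport terms $\int_B\nabla f(\cdot,i)\cdot g(\cdot,i)\,h_i\,dx$ are exactly the weak form of $-\operatorname{div}\!\big(h_i\,g(\cdot,i)\big)$. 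Testing successively with functions $f$ whose restrictions $f(\cdot,0)$ and $f(\cdot,1)$ are supported in $int(B)$ and chosen independently, and using the identity
\[
g(x,i)^T\nabla h_i(x)+h_i(x)\sum_{j=1}^3\partial_j g_j(x,i)=\operatorname{div}\!\big(h_i\,g(\cdot,i)\big)(x),
\]
the coefficients of $f(\cdot,0)$ and of $f(\cdot,1)$ yield precisely the two equations of \eqref{h} in the distributional sense on $int(B)$. This gives the weak solution with the announced properties.

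For uniqueness I would run the computation backwards: given any non-negative integrable pair $(\tilde h_0,\tilde h_1)$ supported in $B$, normalised, and solving \eqref{h} weakly, the measure $\tilde\pi(dx,\{i\})=\tilde h_i(x)\,dx$ satisfies $\int_{\cE}\cL f\,d\tilde\pi=0$ for every $f\in C^1_b(\cE)$ and is therefore an invariant probability measure of $(X,I)$; by the uniqueness part of Theorem \ref{th-mes-inv} one gets $\tilde\pi=\pi$, hence $\tilde h_i=h_i$ almost everywhere. The step I expect to be the main obstacle is precisely this converse implication: passing from the weak identity (a priori available only against test functions compactly supported in $int(B)$) to $\int_{\cE}\cL f\,d\tilde\pi=0$ for all $f\in C^1_b(\cE)$ requires controlling the boundary fluxes $\int_{\partial B}\big(g(\cdot,i)\cdot n\big)\,\tilde h_i\,f\,d\sigma$ created by the integration by parts. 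I would discharge this using the positive invariance of $B$ from Lemma \ref{compact} — on $\partial B$ the fields $g(\cdot,i)$ point inward or are tangent, in particular $g_3(\cdot,0)$ is tangent to the face $\{x_3=0\}$ — together with the fact that the common support $\Gamma$ meets $\partial B$ only along that face, so that no mass flux crosses $\partial B$ and the core property of $C^1_b(\cE)$ indeed makes $\tilde\pi$ invariant.
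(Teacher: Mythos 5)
Your existence argument is essentially the paper's own: extract densities $h_0,h_1$ from Theorem \ref{th-mes-inv} via Radon--Nikodym, write the stationarity identity $\int_{\tot{B}}\cL f\,d\pi=0$, split transport and switching terms, and integrate by parts against test functions whose restrictions $f(\cdot,0),f(\cdot,1)$ are compactly supported in $int(B)$, which yields \eqref{h}.

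The uniqueness half is where you diverge from the paper, and where your sketch has a genuine gap. You correctly isolate the crux: a weak solution of \eqref{h} is tested only against functions compactly supported in $int(B)$, whereas invariance of $\tilde\pi(dx,\{i\})=\tilde h_i(x)\,dx$ must be checked on a core, i.e.\ against functions that need not vanish on $\partial B$ --- and this matters because the support of $\pi$ reaches $\partial B$ (the point $p$ of Lemma \ref{thp} has $p_3=0$). But your way of discharging the boundary fluxes does not work as written. First, it is circular: for an arbitrary normalised weak solution $(\tilde h_0,\tilde h_1)$ you only know $\operatorname{supp}\tilde h_i\subset B$; that this support equals $\Gamma$, hence meets $\partial B$ only along $\{x_3=0\}$, is information available only \emph{after} you have identified $\tilde\pi$ with $\pi$, which is the statement being proved (and the geometric claim itself can fail: if $q_2=0$ then $p_1=B_1$ and $\Gamma$ touches the face $\{x_1=B_1\}$). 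Second, inward-pointing does not give zero flux: the boundary term is $\int_{\partial B}\bigl(g(\cdot,i)\cdot n\bigr)\tilde h_i f\,d\sigma$, and positive invariance only makes it one-signed; on a face where $g(\cdot,i)\cdot n<0$ strictly, it vanishes only if the trace of $\tilde h_i$ vanishes there --- note that on $\{x_3=0\}$ the transversal field is $g(\cdot,1)$, since $g_3(x,1)=r_M(x_2,0)>0$ there, so the tangency of $g(\cdot,0)$ that you invoke does not settle the $i=1$ flux. Moreover, for merely integrable $\tilde h_i$ such traces are not even defined without the normal-trace theory of divergence-measure fields. A repaired version of your idea would sum the two equations of \eqref{h} so that the switching terms cancel, conclude that the total outward flux through $\partial B$ is zero, and use one-signedness (from positive invariance and $\tilde h_i\geq0$) to kill each flux separately; none of this is in your sketch.

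For comparison, the paper sidesteps the boundary issue by a different device: it extends the generator to $\mathbb{R}_+^2\times\mathbb{R}\times\{0,1\}$ by $\tilde{\cL}f(x_1,x_2,x_3,i)=\cL f(x_1,x_2,\max(x_3,0),i)$, observes that Lemma \ref{compact} and Theorem \ref{th-mes-inv} remain valid for the extended process, extends $(\tilde h_0,\tilde h_1)$ by zero to a set $\tilde B\varsupsetneq B$, shows the extended measure is invariant for $\tilde{\cL}$, and concludes by uniqueness of the invariant measure of the \emph{extended} process. Either you adopt that extension trick, or you must substitute the flux-cancellation argument above for your support/inward-pointing reasoning.
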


We will prove the existence first and then the uniqueness of a weak solution of the system \eqref{h} using Theorem \ref{th-mes-inv}.

\begin{proof}[Proof of Theorem \ref{ident}]
	\me Let $C^1(B)$ be the set of functions $f:\Rp{3} \times \{0,1\}$, such that for all $i\in \{0,1\}$,  $f(.,i)$ is $C^1$ on $B$ and $C^1_c(int(B))$ the set of functions $f:\Rp{3} \times \{0,1\}$, such that $f \in C^1(int(B))$ with a support included in $int(B)$.
	
	\me We are looking for $\pi \in \cP(\tot{B})$ such that \begin{equation}
	\label{anulgene}
	\forall f \in C^1(B), \quad \int_{\tot{B}} \cL f(z) \pi(dz) = 0.
	\end{equation} 
	We know, thanks to Theorem \ref{th-mes-inv} and using Radon-Nikodym Theorem, that there exist two positive and integrable functions $h_0$ and $h_1$, with support included in $B$, such that for $i \in \{0,1\}$,  $$\pi(dx,\{i\}) = \displaystyle \sum_{j=0}^1 \delta_{ij} h_j(x) dx.$$ Hence \begin{align}
	\label{gene}
	\int_{\tot{B}} \cL f(z) \pi(dz) = \int_{B} \sum_{i=0}^1  h_i(x) \, \big[ g(x,i)^T \nabla_x f(x,i) +Lf(x,i)\big] dx
	\end{align} where $$Lf(x,i)=a_M(1-i) \big( f(x,i+1)-f(x,i) \big) + q_M(x_2,x_3)\,i\, \big( f(x,i-1)-f(x,i) \big).
		$$
	Furthermore, $\forall x \in B$, \begin{align*}
	\sum_{i=0}^1 h_i(x) \,Lf(x,i) = \big( f(x,1)-f(x,0) \big)  \big( a_M \,h_0(x)- q_M(x_2,x_3) \, h_1(x) \big).
	\end{align*}
	Moreover, integrating by part \eqref{gene}, we obtain for any $i \in \{0,1\}$, $j\in\{1,2,3\}$ and for any function $f \in C^1_c(int(B))$,
	\begin{align*}
	\int_{B} h_i(x) \, g_j(x,i) \partial_{j}f(x,i) dx = 0 - \int_{B} \partial_j(h_i(x) g_j(x,i) ) f(x,i) dx.
	\end{align*}
	Hence, for any function $f \in C^1_c(int(B))$, we obtain
	\begin{align}
	\label{nul}
	\sum_{i=0}^1 \int_{B}  \big[ \displaystyle \sum_{j=1}^3\partial_j(h_i(x) g_j(x,i)) -  (1-2i)\, \big( q_M(x_2,x_3)\,h_{1}(x)-a_M\,h_0(x) \big) \big] f(x,i) dx=0.
	\end{align}
	
	\me Then we deduce that the couple of functions $(h_0,h_1)$ is a weak solution of the system of partial differential equations \eqref{h}.

\me This concludes the first part of the proof: the existence.

\me Now we will prove the uniqueness. 

\medskip Let $(h_0,h_1)$ be positive and integrable functions, weak solution of \eqref{h}, with support included in $B$ and such that $\int_B (h_0(x)+h_1(x)) dx=1$. We will prove the uniqueness of such functions using the uniqueness of invariant probability measure of Theorem \ref{th-mes-inv}.

\me We denote by $\tilde{\mathcal{L}}$ the following extension of the infinitesimal generator $\mathcal{L}$ on $\mathbb{R}_+^2\times \mathbb{R} \times \{0,1\}$, for all $f \in C^1(\mathbb{R}_+^2\times \mathbb{R} \times \{0,1\})$, $$ \tilde{\mathcal{L}}f(x_1,x_2,x_3,i)=\mathcal{L}f(x_1,x_2,\max(x_3,0),i).$$

\noindent Notice that Lemma \ref{compact} is also true for the extended process.

\me Then, let $\tilde{B}$ be a set such that $B \varsubsetneq \tilde{B} \subset \mathbb{R}_+^2\times \mathbb{R}$. We define $(\tilde{h_0},\tilde{h_1})$ as the extension of $(h_0,h_1)$ on $\tilde{B}$, i.e. we assume that $(\tilde{h_0},\tilde{h_1})$ is equal to $(h_0,h_1)$ on $B$ and equal to zero outside of $B$.
 Then by the same integration by parts as previously, we show that $\tilde{\pi}(dx,\{i\}) = \displaystyle \sum_{j=0}^1 \delta_{ij} \tilde{h_j}(x) dx$ defines an invariant probability measure for $\tilde{\mathcal{L}}$. Using Theorem \ref{th-mes-inv} on the extended infinitesimal generator $\tilde{\mathcal{L}}$, we deduce from the uniqueness of $\tilde{\pi}$, the uniqueness of $(h_0,h_1)$.\\
 
 This concludes the proof.
 
\end{proof}

\me Hence, we have mathematically described the macroscopic dynamics of all the cell types involved in the system when a cancer HSC able to become randomly quiescent appears. 

It would be interesting to compare these dynamics with biological observations of the symptoms of Myeloproliferative Neoplams. Then we could, if necessary, integrate into the model the details provided by works \cite{bonnet2019large} and \cite{notrepapier}, respectively on the phenomenon of cellular amplification between HSCs and red blood cells and on the regulation of hematopoietic stem cells.

\section*{Acknowledgements}

This research was led with financial support from ITMO Cancer of AVIESAN (Alliance Nationale pour les Sciences de la Vie et de la Santé, National Alliance for Life Sciences \& Health) within the framework of the Cancer Plan.

\section{Annexe}

\begin{prop}[\cite{Hulin}, Proposition 8.25]
	\label{Hulin}
	Let $X : U \to \bbR^2$ be a $C^1$ vector field such that $$X(x,y) = (f(x,y),g(x,y)).$$ We assume that $U$ is a simply connected open subset of $\bbR^2$. 
	\me If the divergence of $X$ $$div X := \frac{\partial f}{\partial x} + \frac{\partial g}{\partial y}$$ does not cancel, then $X$ does not have a periodic non-stationary orbit.
\end{prop}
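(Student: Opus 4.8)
The plan is to argue by contradiction using the planar divergence theorem (Green's theorem), which is the classical Bendixson argument. Suppose $X$ admits a periodic non-stationary orbit, i.e. there is a solution $t \mapsto \gamma(t) = (x(t),y(t))$ of $\dot x = f(x,y)$, $\dot y = g(x,y)$ which is periodic of some minimal period $T>0$ and is not reduced to a point. By uniqueness of solutions of the $C^1$ system, the orbit cannot cross itself, so its image $\cC := \gamma([0,T])$ is a simple closed curve. The Jordan curve theorem then applies: $\cC$ bounds a bounded open region $\Omega$ with $\partial\Omega = \cC$. This is exactly where the hypothesis on $U$ enters: since $U$ is simply connected and $\cC \subset U$, the interior region $\Omega$ is entirely contained in $U$, so $X$ and $\mathrm{div}\,X$ are continuous on $\overline{\Omega}$.

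First I would compute the outward flux of $X$ across $\partial\Omega$. Orienting $\cC$ by the flow and using $dx = f\,dt$, $dy = g\,dt$ along the orbit, Green's theorem in divergence form gives
\[
\iint_{\Omega} \mathrm{div}\,X \; dx\,dy \;=\; \oint_{\partial\Omega} \big(f\,dy - g\,dx\big) \;=\; \int_0^T \big(f\,g - g\,f\big)\,dt \;=\; 0,
\]
the integrand vanishing identically because $X$ is everywhere tangent to its own trajectory (equivalently, $X\cdot n = 0$ along the orbit, $n$ being the outward normal). If the flow happens to traverse $\cC$ clockwise, an overall sign appears but the value $0$ is unchanged. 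Thus the integral of the divergence over $\Omega$ is forced to vanish.

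On the other hand, $\mathrm{div}\,X$ is continuous on the connected set $\Omega$ and, by hypothesis, never vanishes there; hence it has a constant sign on $\Omega$. Since $\Omega$ is a nonempty open region it has strictly positive Lebesgue measure, so $\iint_\Omega \mathrm{div}\,X\,dx\,dy$ is strictly positive or strictly negative, in particular nonzero. This contradicts the vanishing just obtained, and therefore no periodic non-stationary orbit can exist.

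The genuinely delicate points are topological rather than computational: one must know that a periodic orbit of a planar $C^1$ field is a simple closed curve (no self-intersections, via uniqueness), and then invoke the Jordan curve theorem together with the simple connectedness of $U$ to guarantee that the enclosed region $\Omega$ lies inside the domain, so that $\mathrm{div}\,X$ is defined and continuous on all of $\Omega$. Once these facts are in place, the divergence theorem and the tangency identity make the contradiction immediate. In the application within the paper (the field $G$ on $\widehat{B}$, with $\mathrm{div}\,G<0$ everywhere), one simply takes $U$ to be a simply connected open neighbourhood of the compact set $\widehat{B}$, so all hypotheses are met.
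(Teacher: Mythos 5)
Your proof is correct and follows essentially the same route as the paper: the classical Bendixson argument, assuming a periodic orbit, using simple connectedness to get an enclosed region $\Omega \subset U$, and deriving a contradiction between Green's theorem (the flux integral $\oint f\,dy - g\,dx$ vanishes by tangency of $X$ to its own orbit) and the nonvanishing of $\iint_\Omega \operatorname{div} X\,dx\,dy$. If anything, you are more careful than the paper at two points it glosses over --- invoking the Jordan curve theorem explicitly, and noting that a continuous nonvanishing divergence on the connected region $\Omega$ has constant sign, which is what actually forces the area integral to be nonzero.
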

\begin{proof}
	Let us introduce $\gamma$ a non trivial periodic orbit of $X$. Since the open set $U$ is simply connected, $\gamma$ borders a domain $\Omega \subset U$. Hence, we deduce from Green-Riemann formula that
	$$\int_{\gamma}f dy-gdx=\pm \int_{\Omega} (\frac{\partial f}{\partial x} + \frac{\partial g}{\partial y}) \, dxdy= \pm \int_{\Omega} div X \, dxdy
	$$ doesn't cancel (by assumption). By a variable change, we obtain a contradiction with the existence of $\gamma$, $$\int_{\gamma} f dy-gdx = \int_{\gamma}[ f\circ g(\gamma(t))-g\circ f(\gamma(t))] dt=0.$$

\end{proof}

	\bibliographystyle{plain}
	\bibliography{biblioPDMP}

\begin{thebibliography}{10}

\bibitem{ANDERSEN2020}
M.~Andersen, H.~C. Hasselbalch, L.~Kjær, V.~Skov, and J.~T. Ottesen.
\newblock Global dynamics of healthy and cancer cells competing in the
  hematopoietic system.
\newblock {\em Mathematical Biosciences}, 326:108372, 2020.

\bibitem{Bangsgaard2020}
K.~O. Bangsgaard, M.~Andersen, V.~Skov, L.~Kjær, H.~C. Hasselbalch, and J.~T.
  Ottesen.
\newblock Dynamics of competing heterogeneous clones in blood cancers explains
  multiple observations - a mathematical modeling approach.
\newblock {\em Mathematical Biosciences and Engineering}, 17(6):7645--7670,
  2020.

\bibitem{CoursMeleardVincent}
V.~Bansaye and S.~Méléard.
\newblock {\em Stochastic Models for Structured Populations: Scaling Limits and
  Long Time Behavior}.
\newblock Mathematical Biosciences Institute Lecture Series. Springer
  International Publishing, 2015.

\bibitem{benaim2018user}
M.~Bena{\"\i}m, T.~Hurth, and E.~Strickler.
\newblock A user-friendly condition for exponential ergodicity in randomly
  switched environments.
\newblock {\em Electronic Communications in Probability}, 23, 2018.

\bibitem{benaim2015qualitative}
M.~Bena{\"\i}m, S.~Le~Borgne, F.~Malrieu, and P.-A. Zitt.
\newblock Qualitative properties of certain piecewise deterministic markov
  processes.
\newblock In {\em Annales de l'IHP Probabilit{\'e}s et statistiques},
  volume~51, pages 1040--1075, 2015.

\bibitem{benaim2016lotka}
M.~Bena{\"\i}m and C.~Lobry.
\newblock Lotka--volterra with randomly fluctuating environments or “how
  switching between beneficial environments can make survival harder.
\newblock {\em The Annals of Applied Probability}, 26(6):3754--3785, 2016.

\bibitem{bendixson1901courbes}
I.~Bendixson.
\newblock Sur les courbes d{\'e}finies par des {\'e}quations
  diff{\'e}rentielles.
\newblock {\em Acta Mathematica}, 24:1--88, 1901.

\bibitem{besse2018stability}
A.~Besse, G.D. Clapp, S.~Bernard, F.E. Nicolini, D.~Levy, and T.~Lepoutre.
\newblock Stability analysis of a model of interaction between the immune
  system and cancer cells in chronic myelogenous leukemia.
\newblock {\em Bulletin of mathematical biology}, 80(5):1084--1110, 2018.

\bibitem{billingsley2013convergence}
P.~Billingsley.
\newblock {\em Convergence of probability measures}.
\newblock John Wiley \& Sons, 2013.

\bibitem{notrepapier}
C.~Bonnet, P.~Gou, S.~Girel, V.~Bansaye, C.~Lacout, K.~Bailly, M.-H.
  Schlagetter, E.~Lauret, S.~Méléard, and S.~Giraudier.
\newblock Combined biological and modeling approach of hematopoiesis: From
  native to stressed erythropoiesis.
\newblock {\em Available at SSRN: https://ssrn.com/abstract=3777468}.

\bibitem{bonnet2019large}
C.~Bonnet and S.~Méléard.
\newblock Large fluctuations in multi-scale modeling for rest erythropoiesis.
\newblock {\em 1912.05341, arXiv, math.PR}.

\bibitem{catlin2005kinetics}
S.N. Catlin, P.~Guttorp, and J.L. Abkowitz.
\newblock The kinetics of clonal dominance in myeloproliferative disorders.
\newblock {\em Blood}, 106(8):2688--2692, 2005.

\bibitem{cloez2017probabilistic}
B.~Cloez, R.~Dessalles, A.~Genadot, F.~Malrieu, A.~Marguet, and R.~Yvinec.
\newblock Probabilistic and piecewise deterministic models in biology.
\newblock {\em ESAIM: Proceedings and Surveys}, 60:225--245, 2017.

\bibitem{crudu}
A.~Crudu, A.~Debussche, A.~Muller, and O.~Radulescu.
\newblock Convergence of stochastic gene networks to hybrid piecewise
  deterministic processes.
\newblock {\em The Annals of Applied Probability}, 22(5):1822--1859, 2012.

\bibitem{davis1984piecewise}
MHA Davis.
\newblock Piecewise-deterministic markov processes: A general class of
  non-diffusion stochastic models.
\newblock {\em Journal of the Royal Statistical Society: Series B
  (Methodological)}, 46(3):353--376, 1984.

\bibitem{davis1993markov}
MHA Davis.
\newblock Markov models and optimization. chapman \&amp; hall, 1993.

\bibitem{dingli2006successful}
D.~Dingli and F.~Michor.
\newblock Successful therapy must eradicate cancer stem cells.
\newblock {\em Stem cells}, 24(12):2603--2610, 2006.

\bibitem{dingli2011stochastic}
D.~Dingli and J.M. Pacheco.
\newblock Stochastic dynamics and the evolution of mutations in stem cells.
\newblock {\em BMC biology}, 9(1):41, 2011.

\bibitem{dingli2007symmetric}
D~Dingli, A~Traulsen, and F~Michor.
\newblock (a) symmetric stem cell replication and cancer.
\newblock {\em PLoS computational biology}, 3(3), 2007.

\bibitem{fasano2017blood}
A.~Fasano and A.~Sequeira.
\newblock Blood and cancer.
\newblock In {\em Hemomath}, pages 295--330. Springer, 2017.

\bibitem{fontbona2012quantitative}
J.~Fontbona, H.~Gu{\'e}rin, and F.~Malrieu.
\newblock Quantitative estimates for the long-time behavior of an ergodic
  variant of the telegraph process.
\newblock {\em Advances in Applied Probability}, 44(4):977--994, 2012.

\bibitem{glauche2012therapy}
I.~Glauche, K.~Horn, M.~Horn, L.~Thielecke, M.A.G. Essers, A.~Trumpp, and
  I.~Roeder.
\newblock Therapy of chronic myeloid leukaemia can benefit from the activation
  of stem cells: simulation studies of different treatment combinations.
\newblock {\em British journal of cancer}, 106(11):1742--1752, 2012.

\bibitem{haeno2009progenitor}
H.~Haeno, R.L. Levine, D.G. Gilliland, and F.~Michor.
\newblock A progenitor cell origin of myeloid malignancies.
\newblock {\em Proceedings of the National Academy of Sciences},
  106(39):16616--16621, 2009.

\bibitem{horn2008mathematical}
M.~Horn, M.~Loeffler, and I.~Roeder.
\newblock Mathematical modeling of genesis and treatment of chronic myeloid
  leukemia.
\newblock {\em Cells Tissues Organs}, 188(1-2):236--247, 2008.

\bibitem{Hulin}
D.~Hulin.
\newblock Equations différentielles ordinaires. etudes qualitatives.
\newblock {\em Cours L3 MFA Université Paris-Sud}.

\bibitem{jacobsen2006point}
M.~Jacobsen.
\newblock {\em Point process theory and applications: marked point and
  piecewise deterministic processes}.
\newblock Springer Science \& Business Media, 2006.

\bibitem{JoffeMetivier}
A.~Joffe and M.~Métivier.
\newblock Weak convergence of sequences of semimartingales with applications to
  multitype branching processes.
\newblock {\em Advances in Applied Probability}, 18(1):20–65, 1986.

\bibitem{komarova2007effect}
N.L. Komarova and D.~Wodarz.
\newblock Effect of cellular quiescence on the success of targeted cml therapy.
\newblock {\em PLoS one}, 2(10), 2007.

\bibitem{lepelmar}
J.-P. Lepeltier and B.~Marchal.
\newblock Probl{\`e}me des martingales et {\'e}quations diff{\'e}rentielles
  stochastiques associ{\'e}es {\`a} un op{\'e}rateur
  int{\'e}gro-diff{\'e}rentiel.
\newblock In {\em Annales de l'IHP Probabilit{\'e}s et statistiques},
  volume~12, pages 43--103, 1976.

\bibitem{li2017threshold}
D.~Li and S.~Liu.
\newblock Threshold dynamics and ergodicity of an sirs epidemic model with
  markovian switching.
\newblock {\em Journal of Differential Equations}, 263(12):8873--8915, 2017.

\bibitem{mackey2013dynamic}
M.C. Mackey, M.~Tyran-Kaminska, and R.~Yvinec.
\newblock Dynamic behavior of stochastic gene expression models in the presence
  of bursting.
\newblock {\em SIAM Journal on Applied Mathematics}, 73(5):1830--1852, 2013.

\bibitem{malrieu2015some}
F.~Malrieu.
\newblock Some simple but challenging markov processes.
\newblock In {\em Annales de la Facult{\'e} des sciences de Toulouse:
  Math{\'e}matiques}, volume~24, pages 857--883, 2015.

\bibitem{michor2008mathematical}
F.~Michor.
\newblock Mathematical models of cancer stem cells.
\newblock {\em Journal of Clinical Oncology}, 26(17):2854--2861, 2008.

\bibitem{pedersen2023}
Rasmus~K Pedersen, Morten Andersen, Thomas Stiehl, and Johnny~T Ottesen.
\newblock Understanding hematopoietic stem cell dynamics—insights from
  mathematical modelling.
\newblock {\em Current Stem Cell Reports}, pages 1--8, 2023.

\bibitem{roeder2006dynamic}
I.~Roeder, M.~Horn, I~Glauche, A~Hochhaus, M~C Mueller, and M~Loeffler.
\newblock Dynamic modeling of imatinib-treated chronic myeloid leukemia:
  functional insights and clinical implications.
\newblock {\em Nature medicine}, 12(10):1181--1184, 2006.

\bibitem{stiehl2012mathematical}
T.~Stiehl and A.~Marciniak-Czochra.
\newblock Mathematical modeling of leukemogenesis and cancer stem cell
  dynamics.
\newblock {\em Mathematical Modelling of Natural Phenomena}, 7(1):166--202,
  2012.

\bibitem{yvinec2014adiabatic}
R.~Yvinec, C.~Zhuge, J.~Lei, and M.C. Mackey.
\newblock Adiabatic reduction of a model of stochastic gene expression with
  jump markov process.
\newblock {\em Journal of mathematical biology}, 68(5):1051--1070, 2014.

\end{thebibliography}
\end{document}